\newtheorem{theorem}{Theorem}[section]
\newtheorem{proposition}[theorem]{Proposition}
\newtheorem{lemma}[theorem]{Lemma}
\newtheorem{corollary}[theorem]{Corollary}
\theoremstyle{definition}
\newtheorem{definition}[theorem]{Definition}
\newtheorem{remark}[theorem]{Remark}
\numberwithin{equation}{section}
\begin{document}

\baselineskip=15pt

\title[Branes and Higgs bundles on projective varieties]{Branes and moduli spaces of Higgs 
bundles on smooth projective varieties}

\author[I. Biswas]{Indranil Biswas}

\address{School of Mathematics, Tata Institute of Fundamental
Research, Homi Bhabha Road, Mumbai 400005, India}

\email{indranil@math.tifr.res.in}

\author[S. Heller]{Sebastian Heller}

\address{Institute of Differential Geometry,
Leibniz Universit\"at Hannover,
Welfengarten 1, 30167 Hannover}

\email{seb.heller@gmail.com}

\author[L. P. Schaposnik]{Laura P. Schaposnik}

\address{Department of Mathematics, Statistics, and Computer Science, University of
Illinois at Chicago, 851 S Morgan St, Chicago, IL 60607, United States}

\email{schapos@uic.edu}

\subjclass[2010]{14D21, 32L25, 14H70}

\keywords{Branes, connection, Higgs bundle, hyperK\"ahler manifold, twistor space.}

\thanks{IB: Corresponding author; indranil@math.tifr.res.in}

\date{}

\begin{abstract}
Given a smooth complex projective variety $M$ and a smooth closed curve $X\, \subset\, M$ such that
the homomorphism of fundamental groups $\pi_1(X)\, \longrightarrow\, \pi_1(M)$ is surjective,
we study the restriction map of Higgs bundles, namely from the Higgs bundles on $M$ to those on $X$.
In particular, we investigate the interplay between this restriction map and various types of
branes contained in the moduli spaces of Higgs bundles on $M$ and $X$. We also consider
the set-up where a finite group is acting on $M$ via holomorphic automorphisms or
anti-holomorphic involutions, and the curve
$X$ is preserved by this action. Branes are studied in this context.
\end{abstract}

\maketitle

\section{Introduction}\label{intro.}

Lagrangian and holomorphic spaces have been of much interest within the study of Higgs bundles
both on Riemann surfaces and on higher dimensional varieties. We shall consider here
an irreducible smooth complex
projective variety $M$ of dimension $d$, and consider the following moduli spaces:
\begin{itemize}
\item the Betti moduli 
space ${\mathcal B}_M(r)$ of equivalence classes of representations of $\pi_1(M,\, x_0)$
in $\text{GL}(r, {\mathbb C})$;

\item the moduli space ${\mathcal H}_M(r)$ of semistable Higgs bundles on $M$, of rank $r$ and vanishing
Chern classes; and

\item the Deligne--Hitchin moduli space $\mathcal M^{DH}_M(r)$ which is the twistor space of
the hyperK\"ahler space $\mathcal H_M(r)$.
\end{itemize}

We shall dedicate this paper to the study of Lagrangian and holomorphic spaces appearing within the above moduli 
spaces arising through the inclusion of curves and hypersurfaces in $M$, as well as the action of 
anti-holomorphic involutions of $M$ and also the finite group actions on $M$. In the case of Riemann surfaces, real 
structures have long been studied in relation to Hitchin systems. In particular, \cite{aba,slices} initiated the 
study of branes arising through anti-holomorphic involutions from the perspective of \cite{kap}. Lagrangians 
within moduli spaces of Higgs bundles arising through finite group actions were considered also in the setting of Riemann surfaces 
in \cite{finite}. In the present manuscript, we shall follow the lines of thought of the above papers.
 
After introducing Higgs bundles from a differential geometric perspective, the Betti moduli space of 
representations, and a brief description of nonabelian Hodge theory in Section \ref{nah}, we study the 
implications of considering a smooth closed curve $X$ on $M$ such that the homomorphism of fundamental groups 
$\pi_1(X,\, x_0)\, \longrightarrow\, \pi_1(M,\, x_0)$ induced by the inclusion map $X\, \hookrightarrow\, M$ is 
surjective (this is done in Section \ref{Section:Restriction_to_curves}).
This inclusion map induces maps $$\Phi\, :\, {\mathcal 
B}_M(r)\, \longrightarrow\, {\mathcal B}_X(r)$$ on the Betti moduli spaces and $$\Psi\, :\, {\mathcal H}_M(r)\, 
\longrightarrow\, {\mathcal H}_X(r)$$ on the Higgs moduli spaces, as seen in \eqref{e5} and \eqref{e7} 
respectively. The given condition that the homomorphism $\pi_1(X,\, x_0)\, \longrightarrow\, \pi_1(M,\, x_0)$
induced by the inclusion map $X\, \hookrightarrow\, M$ is
surjective ensures that the above maps $\Phi$ and $\Psi$ are actually embeddings.
By studying these maps, through Theorem \ref{thm1} and Corollary \ref{cor1} we are able to show 
the following.

\smallskip
\noindent {\bf Theorem A.}\, {\it The above map $\Psi$ makes ${\mathcal H}_M(r)$ a hyperK\"ahler subspace of
${\mathcal H}_X(r)$, and thus the subspace ${\mathcal H}_M(r)$ is a 
$(B,\,B,\,B)$--brane in ${\mathcal H}_X(r)$.} 
\smallskip

The case of Deligne--Hitchin moduli spaces is considered in Section \ref{Del}, where we look into the relationship
of these spaces with twistor spaces of certain hyperK\"ahler manifolds. Let ${\mathcal M}^{DH}_M(r)$ and
${\mathcal M}^{DH}_X(r)$ be the Deligne--Hitchin moduli spaces for $M$ and $X$ respectively.
In this setting, through an induced map $$ 
\Upsilon\, :\, {\mathcal M}^{DH}_M(r)\, \longrightarrow\, {\mathcal M}^{DH}_X(r)$$
constructed using restriction to $X$ (see \eqref{etwistz}) we prove the following in Theorem \ref{thm2}:

\smallskip
\noindent {\bf Theorem B.}\,
{\it The above map $\Upsilon$ makes ${\mathcal M}^{DH}_M(r)$ a twistor subspace of
${\mathcal M}^{DH}_X(r)$.}
\smallskip

We begin the study of actions by considering real structures $\sigma\,:\,M\,\longrightarrow\, M$ in Section 
\ref{sec:real}. By observing that one may choose the real structure and the subvariety $X$ such that they are 
compatible (Lemma \ref{lem12}), we show in Theorem \ref{prop1} the following.

\smallskip
\noindent {\bf Theorem C.}\, {\it Choose the closed curve $X$ such that it is
preserved by the real structure $\sigma$ on $M$. Take $S$ to be $X$ or $M$. Then, the $C^\infty$ involution
${\mathcal I}_S$ induced on $\mathcal B_S(r)$ by $\sigma$ (the restriction of $\sigma$ when $S\,=\, X$)
is anti-holomorphic with respect to the
complex structure $I$ (that gives the complex structure of the moduli space
of Higgs bundles ${\mathcal H}_S(r)$) and holomorphic with respect to the
complex structure $J$ (that gives the complex structure of
${\mathcal B}_S(r)$), and thus the fixed point set ${\mathcal H}_S(r)^{{\mathcal I}_S}$ is a
$(A,\,B,\,A)$--brane in the corresponding moduli space ${\mathcal H}_S(r)$. Furthermore,
$${\mathcal I}_X\circ\Psi\,=\, \Psi\circ{\mathcal I}_M$$ for $\Psi$ in Theorem A.}
\smallskip

The case of finite group actions on $M$ is studied in Section \ref{sec:finite}. This line of research begun in 
\cite{finite} for rank two Higgs bundles on Riemann surfaces where it was shown that equivariant Higgs bundles under a 
finite group give natural $(B,\,B,\,B)$--branes in the moduli space of Higgs bundles. Here, we shall address the 
higher rank cases and consider finite group actions on the moduli space of fixed determinant Higgs bundles 
for which the trace of the Higgs field being zero (${\rm SL}(r,\mathbb{C})$--Higgs bundles). One of our main results, 
appearing in Theorem \ref{teo1} and Proposition \ref{teo2} is the following:

\smallskip
\noindent {\bf Theorem D.}\, {\it Let $M$ be a compact Riemann surface of genus $g$
on which a finite group $\Gamma$ is acting via holomorphic automorphisms.
Consider the moduli space 
$\mathcal{M}_{{\rm SL}(r,\mathbb{C})}\, \subset\, {\mathcal H}_M(r)$ of semistable Higgs bundles
on $M$ whose structure group is ${\rm SL}(r,\mathbb{C})$. Then, for $r\,>\,2$ the following hold:
\begin{itemize}
\item For even genus $g$, the fixed point locus in $\mathcal{M}_{{\rm SL}(r,\mathbb{C})}$ giving a $(B,\,B,\,B)$--brane
will never be a mid dimensional space. 

\item For odd genus $g$, the fixed point free action of $\Gamma\,:=\,\mathbb{Z}/2\mathbb{Z}$ on
$M$ defines a mid dimensional space as its fixed point locus on $\mathcal{M}_{{\rm SL}(r,\mathbb{C})}$.
\end{itemize}}
\smallskip

The subspaces constructed in Theorem D can be further studied, and as shown in Proposition \ref{teo3}, 
all these mid-dimensional spaces constructed in Theorem \ref{teo1} and Proposition \ref{teo2} are
in fact $(B,\,B,\,B)$--branes.

\section{Nonabelian Hodge theory}\label{nah}

In what follows we shall introduce Higgs bundles from a differential geometric perspective, and also
the Betti moduli space of representations, and then give a brief description of nonabelian Hodge theory. 

\subsection{Higgs bundles}

Let $M$ be an irreducible smooth complex projective variety of dimension $d$. The holomorphic cotangent and
tangent bundles of $M$ will be denoted by $\Omega^1_M$ and $TM$ respectively. The $i$--fold exterior
product $\bigwedge^i \Omega^1_M$ will be denoted by $\Omega^i_M$.
Fixing a very ample line bundle $L$ on $M$, the degree of a torsionfree coherent sheaf $F$ on $X$
is defined to be
$$
\text{degree}(F)\, :=\, (c_1(\det F)\cup c_1(L)^{d-1})\cap [M] \,\in\, \mathbb Z
$$
(see \cite[Ch.~V,\S~6]{Ko} for the determinant bundle $\det F$). The number
$$
\mu(F)\, :=\, \frac{\text{degree}(F)}{\text{rank}(F)}\, \in\, \mathbb Q
$$
is called the \textit{slope} of $F$. From \cite{Hi1,Hi2,Si1,Si2} one has the following definition:

\begin{definition}A \textit{Higgs field} on a vector bundle $E$ over $M$ is a holomorphic section
$$
\theta\, \in\, H^0(M,\, \text{End}(E)\otimes\Omega^1_M)
$$
such that the section $\theta\bigwedge\theta$ of $\text{End}(E)\otimes\Omega^2_M$ vanishes
identically. A \textit{Higgs bundle} on $M$ is a pair $(E,\, \theta)$,
where $E$ is a holomorphic vector bundle and $\theta$ is a Higgs field on $E$.
\end{definition}

\begin{definition}A Higgs bundle $(E,\, \theta)$ is called \textit{stable}
(respectively, \textit{semistable}) if
$$
\mu(F)\, <\, \mu(E) \ \ \ \text{(respectively,}~ \mu(F)\, \leq\, \mu(E)\text{)}
$$
for all coherent subsheaves $F\, \subset\, E$ with $0\, <\, \text{rank}(F)\, <\, \text{rank}(E)$
and $\theta(F)\, \subset\, F\otimes\Omega^1_M$ \cite{Hi1,Si1,Si2}. A Higgs bundle
$(E,\, \theta)$ is called \textit{polystable} if
\begin{itemize}
\item $(E,\, \theta)$ is semistable, and

\item $(E,\, \theta)\,=\, \bigoplus_{i=1}^\ell (E_i,\, \theta_i)$, where each $(E_i,\, \theta_i)$
is a stable Higgs bundle.
\end{itemize}
\end{definition}

Let $(E,\, \theta)$ be a Higgs bundle on $M$ such that
\begin{equation}\label{e2}
\text{degree}(E)\,=\, 0\, =\, (ch_2(E)\cup c_1(L)^{d-2})\cap [M]\, ;
\end{equation}
we recall that $ch_2(E)\,=\, \frac{1}{2}c_1(E)^2 - c_2(E)$.
Given a Hermitian structure $h$ on $E$, the Chern connection on $E$ corresponding to $h$ will be
denoted by $\nabla^h$, and the curvature of the connection $\nabla^h$ will be denoted by ${\mathcal K}(\nabla^h)$.
Let
$$
\theta^*\, \in\, C^\infty(M;\, \text{End}(E)\otimes\Omega^{0,1}_M)
$$
be the adjoint of $\theta$ with respect to the Hermitian structure
$h$. Then, from \cite{Hi1,Si1,Si2}, the following is the Hermitian--Yang--Mills equation:
\begin{equation}\label{e3}
{\mathcal K}(\nabla^h) +[\theta\,,\theta^*]\, =\, 0.
\end{equation}

A theorem of Simpson says that $E$ admits a Hermitian metric satisfying the 
Hermitian--Yang--Mills equation for $(E,\, \theta)$ if and only if $(E,\, \theta)$ is a polystable 
Higgs bundle \cite[Theorem 1]{Si1}, \cite[Proposition 3.4]{Si1}, \cite[Theorem 1]{Si2}. When $E$ is a rank 
two vector bundle on a smooth complex projective curve, the result was proven earlier by 
Hitchin in \cite{Hi1}.

We note that the Hermitian--Yang--Mills equation in \eqref{e3} implies that
\begin{equation}\label{e4}
\nabla^h+\theta+\theta^*
\end{equation}
is a flat connection on $E$. Let
$$
\nabla^h\,=\, (\nabla^h)^{1,0}+(\nabla^h)^{0,1}
$$
be the decomposition of the Chern connection $\nabla^h$ into $(1,\, 0)$ and $(0,\, 1)$ components. 

\begin{remark}The holomorphic structure on $E$ is given by the Dolbeault operator $(\nabla^h)^{0,1}$. Note that 
the Dolbeault operator for the holomorphic structure on the $C^\infty$ bundle $E$ given by 
the flat connection in \eqref{e4} is $(\nabla^h)^{0,1}+\theta^*$. Therefore, the holomorphic 
vector bundle given by the flat connection corresponding to a Higgs bundle does not 
coincide, in general, with the holomorphic vector bundle underlying the Higgs bundle.
\end{remark}
\subsection{Flat connections}

Given a base point $x_0\, \in\, M$, a representation
$$
\rho\, :\, \pi_1(M,\, x_0)\, \longrightarrow\, \text{GL}(r,{\mathbb C})
$$
is called {\it irreducible} if the standard action of $\rho(\pi_1(M,\, x_0))\, \subset\,
\text{GL}(r,{\mathbb C})$ on ${\mathbb C}^r$ does not preserve any nonzero proper subspace of
${\mathbb C}^r$. The homomorphism $\rho$ is called {\it completely reducible} if it is a direct sum
of irreducible representations.
Two homomorphisms $$\rho_1,\, \rho_2\, :\, \pi_1(M,\, x_0)\, \longrightarrow\, \text{GL}(r,{\mathbb C})$$
are called {\it equivalent} if there is an element $g\, \in\, \text{GL}(r,{\mathbb C})$ such
that
$$
\rho_1(\gamma)\,=\, g^{-1} \rho_2(\gamma)g
$$
for all $\gamma\, \in\, \pi_1(M,\, x_0)$. Clearly, this equivalence relation preserves both irreducibility
and complete reducibility. The space of equivalence classes of completely reducible
homomorphisms from $\pi_1(M,\, x_0)$ to $\text{GL}(r,{\mathbb C})$ has the structure of an affine
scheme defined over $\mathbb C$, which can be seen as follows. Note that $\pi_1(M,\, x_0)$ is a finitely
presented group and $\text{GL}(r,{\mathbb C})$ is a complex affine algebraic group.
Therefore, the space of all homomorphisms
$\text{Hom}(\pi_1(M,\, x_0),\, \text{GL}(r,{\mathbb C}))$ is a complex affine scheme.
The adjoint action of $\text{GL}(r,{\mathbb C})$ on itself produces
an action of $\text{GL}(r,{\mathbb C})$ on $\text{Hom}(\pi_1(M,\, x_0),\, \text{GL}(r,{\mathbb C}))$. The geometric
invariant theoretic quotient $$\text{Hom}(\pi_1(M,\, x_0),\, \text{GL}(r,{\mathbb C}))/\!\!/\text{GL}(r,{\mathbb C})$$
is the moduli space of equivalence classes of completely reducible
homomorphisms from $\pi_1(M,\, x_0)$ to $\text{GL}(r,{\mathbb C})$ \cite{Si3,Si4}.

We shall let ${\mathcal B}_M(r)$ denote this moduli space of equivalence classes of completely reducible
homomorphisms from $\pi_1(M,\, x_0)$ to $\text{GL}(r,{\mathbb C})$, which is known as the
Betti moduli space.

A homomorphism $\rho\, :\, \pi_1(M,\, x_0)\, \longrightarrow\, \text{GL}(r,{\mathbb C})$ 
produces an algebraic vector bundle $E$ on $M$ of rank $r$ equipped with a flat (i.e., 
integrable) algebraic connection, together with an isomorphism of the fiber $E_{x_0}$ with 
${\mathbb C}^r$. Equivalence classes of such homomorphisms correspond to algebraic vector 
bundles of rank $r$ equipped with a flat algebraic connection; this is an example of the
Riemann--Hilbert correspondence.

\begin{definition}A connection $\nabla$ on a vector bundle $E$ is called \textit{irreducible} 
if there is no subbundle $0\, \not=\, F\, \subsetneq\, E$ which is preserved by $\nabla$. A 
connection $\nabla$ on a vector bundle $E$ is called \textit{completely reducible} if
$$
(E,\,\nabla)\,=\, \bigoplus_{i=1}^N(E_i,\,\nabla^i)\, ,
$$
where each $\nabla^i$ is an irreducible connection on $E_i$. \end{definition}

We note that irreducible 
(respectively, completely reducible) flat algebraic connections of rank $r$ on $M$ 
correspond to irreducible (respectively, completely reducible) equivalence classes of 
homomorphisms from $\pi_1(M,\, x_0)$ to $\text{GL}(r,{\mathbb C})$.

\subsection{Harmonic structures}

Let $(E,\,\nabla)$ be a flat vector bundle of rank $r$ on $M$, and let
$$
\rho\, :\, \pi_1(M,\, x_0)\, \longrightarrow\, \text{GL}(E_{x_0})
$$
be the monodromy homomorphism for $\nabla$.

Given the universal cover $\varpi\, :\, (\widetilde{M},\, \widetilde{x}_0)\, \longrightarrow\,
(M,\, x_0)$, using the pulled back connection $\varpi^*\nabla$, the pulled back bundle $\varpi^*E$
is identified with the trivial bundle $\widetilde{M}\times E_{x_0}\, \longrightarrow\,
\widetilde{M}$. Therefore, a
Hermitian structure $h$ on $E$ gives a $C^\infty$ map
\begin{equation}\label{fh}
F^h\, :\, \widetilde{M}\, \longrightarrow\, \text{GL}(E_{x_0})/\text{U}(E_{x_0})\, ,
\end{equation}
where $\text{U}(E_{x_0})$ consists of all automorphisms of $E_{x_0}$ that preserve
the Hermitian structure $h(x_0)$ on it. The group
$\pi_1(M, \, x_0)$ acts on $\text{GL}(E_{x_0})/\text{U}(E_{x_0})$ through the left translation action of
$\rho(\pi_1(M,\, x_0))$. We note that the map $F^h$ in \eqref{fh} is $\pi_1(M, \, x_0)$--equivariant. The
quotient space $\text{GL}(E_{x_0})/\text{U}(E_{x_0})$ is equipped with a Riemannian metric; it is given by
the restriction of the Killing form on the orthogonal complement of
${\rm Lie}(\text{U}(E_{x_0}))\, \subset\, {\rm Lie}(\text{GL}(E_{x_0}))$.

The Hermitian structure $h$ is called \textit{harmonic} if the map $F^h$ in \eqref{fh} is harmonic 
with respect to a K\"ahler structure on $M$. It should be emphasized that the harmonicity 
condition for $F^h$ does not depend on the choice of the K\"ahler form on $M$.

A theorem of Corlette says that $(E,\,\nabla)$ admits a harmonic Hermitian structure if and 
only if $\nabla$ is completely reducible \cite{Co} (when $E$ is of rank two on a smooth 
complex projective curve, this was proved in \cite{Do} by Donaldson).

Let $(E,\, \theta)$ be a polystable Higgs bundle on $M$ satisfying \eqref{e2}, and consider a 
Hermitian structure $h$ on $E$ satisfying the Hermitian--Yang--Mills equation in \eqref{e3}. 
Then, the flat connection $\nabla^h+\theta+\theta^*$ on $E$ in \eqref{e4} is completely 
reducible, and $h$ is a harmonic Hermitian structure for the flat connection 
$\nabla^h+\theta+\theta^*$. Conversely, if $h$ is a harmonic Hermitian structure for a flat 
connection $(E,\, \nabla)$, then $h$ and $\nabla$ together define a holomorphic structure on 
$E$ and a Higgs field $\theta$ on $E$ for this holomorphic structure such that $h$ satisfies 
the Hermitian--Yang--Mills equation for the Higgs bundle $(E,\, \theta)$.

As shown in \cite[p.~20, Corollary 1.3]{Si2}, the above constructions produce an equivalence 
of categories between the following two categories:
\begin{enumerate}
\item Objects are completely reducible flat algebraic connections on $M$, and morphisms are connection
preserving homomorphisms.

\item Objects are polystable Higgs bundles $(E,\, \theta)$ on $M$
satisfying \eqref{e2}; the morphisms are homomorphisms of Higgs bundles.
\end{enumerate}

We also note that if $(E,\, \theta)$ is
a polystable Higgs bundle on $M$ satisfying \eqref{e2}, then all the rational Chern classes of
$E$ of positive degree vanish \cite[p.~878--879, Proposition 3.4]{Si1}. (See also \cite{We} for an
exposition for dimension one.)

\section{Restriction to curves}\label{Section:Restriction_to_curves}

We shall dedicate this section to understanding the restriction of the ideas of Section 
\ref{intro.} to hypersurfaces, which will become useful when studying Lagrangians within the 
moduli space of Higgs bundles.

\subsection{Higgs bundles and hypersurfaces}\label{restriction}

The homotopical version of the Lefschetz hyperplane theorem says that for a smooth very ample 
hypersurface $H\, \subset\, M$, the homomorphism of fundamental groups $\pi_1(H,\, x_0)\, 
\longrightarrow\, \pi_1(M,\, x_0)$ induced by the inclusion map $H\, \hookrightarrow\, M$, 
where $x_0\, \in\, H$, is surjective, and it is an isomorphism if $d\, =\, \dim M \, \geq\, 3$ (see 
\cite[p.~48, (8.1.1)]{La}). Consequently, using induction on the number of hypersurfaces we 
conclude that for a smooth closed curve $X$ on $M$ which is the intersection of $d-1$ smooth 
very ample hypersurfaces on $M$, the homomorphism of fundamental groups $\pi_1(X,\, x_0)\, 
\longrightarrow\, \pi_1(M,\, x_0)$ induced by the inclusion map $X\, \hookrightarrow\, M$ is 
surjective.

Fix a smooth closed curve $X$ on $M$ such that the homomorphism of fundamental groups
$\pi_1(X,\, x_0)\, \longrightarrow\, \pi_1(M,\, x_0)$ induced by the inclusion map $X\, \hookrightarrow\, M$
is surjective, and let
\begin{equation}\label{e1}
\phi\, :\, X\, \hookrightarrow\, M
\end{equation}
be the inclusion map.

As done previously, consider a polystable Higgs bundle $(E,\, \theta)$ on $M$ satisfying 
\eqref{e2}, and let $h$ be a Hermitian structure on $E$ that satisfies the 
Hermitian--Yang--Mills equation in \eqref{e3} for the Higgs bundle $(E,\, \theta)$.

\begin{lemma}\label{3.1}
The Hermitian structure $\phi^*h$ on $\phi^*E$ satisfies Hermitian--Yang--Mills
equation for the induced Higgs bundle $(\phi^*E,\, \phi^*\theta)$ on $X$.
\end{lemma}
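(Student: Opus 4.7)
The proof will be a direct functoriality argument. All the ingredients of the Hermitian--Yang--Mills equation \eqref{e3} — the Chern connection, its curvature, the adjoint with respect to a Hermitian metric, and the bracket of endomorphism-valued forms — commute with holomorphic pullback, and so the equation itself descends (or rather, restricts) from $M$ to $X$ along $\phi$.

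First I would record the naturality of the Chern connection: since $\phi\colon X\hookrightarrow M$ is a holomorphic inclusion, the pullback bundle $\phi^*E$ inherits both a holomorphic structure and the Hermitian metric $\phi^*h$, and the pullback connection $\phi^*(\nabla^h)$ is compatible with both. By uniqueness of the Chern connection, one concludes
$$\phi^*(\nabla^h)\,=\,\nabla^{\phi^*h}$$
on $\phi^*E$, and consequently $\mathcal{K}(\nabla^{\phi^*h})\,=\,\phi^*\mathcal{K}(\nabla^h)$. Next I would check that the adjoint is natural: the fiberwise $h$-adjoint operation commutes with restriction to fibers over $X$, and the cotangent part of $\theta$ restricts through the canonical surjection $\phi^*\Omega^1_M\,\longrightarrow\,\Omega^1_X$; this gives
$$(\phi^*\theta)^{*}\,=\,\phi^*(\theta^{*})$$
with respect to $\phi^*h$. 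Because $\theta$ and $\theta^{*}$ are one-form-valued, it also follows immediately that $\phi^*[\theta,\theta^{*}]\,=\,[\phi^*\theta,\,\phi^*\theta^{*}]$.

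With these naturality statements in hand, I would apply $\phi^*$ to both sides of \eqref{e3}. The curvature $\mathcal{K}(\nabla^h)$ is of type $(1,1)$, as is $[\theta,\theta^{*}]$, and both types are preserved under holomorphic pullback, so pulling back yields an identity in $C^\infty\bigl(X;\,\mathrm{End}(\phi^*E)\otimes\Omega^{1,1}_X\bigr)$. Substituting the identifications of the previous paragraph gives
$$\mathcal{K}(\nabla^{\phi^*h})\,+\,[\phi^*\theta,\,(\phi^*\theta)^{*}]\,=\,0,$$
which is precisely the Hermitian--Yang--Mills equation \eqref{e3} for the Higgs bundle $(\phi^*E,\,\phi^*\theta)$ equipped with the Hermitian metric $\phi^*h$.

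There is no real obstacle here; the content is entirely that pullback respects the holomorphic and Hermitian data, and hence respects every construction built from them. The only care required is in the bookkeeping of how the cotangent factor of $\theta$ is restricted via $\phi^*\Omega^1_M\to\Omega^1_X$, but this restriction is compatible with both the wedge product used in forming $[\theta,\theta^{*}]$ and the Hermitian pairing used in the adjoint, so it introduces no difficulty.
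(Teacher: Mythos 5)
Your proof is correct and follows exactly the paper's argument: the paper's proof likewise rests on the identities $\nabla^{\phi^*h}\,=\,\phi^*\nabla^h$ and $(\phi^*\theta)^*\,=\,\phi^*(\theta^*)$, together with the fact that $\phi^*\theta$ is a Higgs field via $(d\phi)^*\,:\,\phi^*\Omega^1_M\,\longrightarrow\,\Omega^1_X$, and then pulls back the equation \eqref{e3}. Your write-up simply spells out the bookkeeping the paper leaves implicit.
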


\begin{proof}
The pulled back section $\phi^*\theta$ defines a Higgs field on $\phi^*E$ using the
natural homomorphism
$$
(d\phi)^*\, :\, \phi^*\Omega^1_M\, \longrightarrow\, \Omega^1_X\, ,
$$
where $d\phi\, :\, TX\, \longrightarrow\, \phi^*TM$ is the differential of the map $\phi$ in \eqref{e1}.
The vector bundle $\phi^*E$ has the pulled back Hermitian 
structure $\phi^*h$. Note that the Chern connection $\nabla^{\phi^*h}$ on $\phi^*E$ for the 
Hermitian structure $\phi^*h$ coincides with $\phi^*\nabla^h$, where $\nabla^h$ is the Chern 
connection on $E$ for the Hermitian structure $h$. Moreover, we also have that 
$\phi^*(\theta^*)\,=\, (\phi^*\theta)^*$. Using these, and the fact that $h$ satisfies the 
Hermitian--Yang--Mills equation for the Higgs bundle $(E,\, \theta)$, the lemma follows. 
\end{proof}

From Lemma \ref{3.1} one has that
\begin{itemize}
\item the Higgs bundle $(\phi^*E,\, \phi^*\theta)$ is polystable, and

\item $\nabla^{\phi^*h}+\phi^*\theta + \phi^*\theta^*$ is a flat connection on
$\phi^*E$.
\end{itemize}
In particular, we have $\text{degree}(\phi^*E)\,=\, 0$.
Let $(W,\, \nabla)$ be the flat connection on $M$ corresponding to the Higgs
bundle $(E,\, \theta)$; recall that $E$ and $W$ are the same
$C^\infty$ vector bundle with possibly different holomorphic structures. From Lemma \ref{3.1}
it follows that the flat connection
corresponding to $(\phi^*E,\, \phi^*\theta)$ coincides with
the pulled back flat connection $(\phi^*W,\, \phi^*\nabla)$.

\subsection{Harmonic structures and hypersurfaces}

Consider now an algebraic vector bundle $V$ of rank $r$ on $M$ equipped with a flat algebraic 
completely reducible connection $\nabla$. For $x_0\, \in\, X$ and the map $\phi$ in 
\eqref{e1}, the homomorphism
\begin{equation}\label{e6}
\phi_*\, :\, \pi_1(X,\, x_0)\,\longrightarrow\, \pi_1(M,\, \phi(x_0))\, ,
\end{equation}
is surjective, and thus it follows
immediately that $\phi^*\nabla$ is a flat algebraic
completely reducible connection on $\phi^*V$.

Let $h_V$ be a harmonic Hermitian
structure on $V$ for $\nabla$, and let $(W,\, \theta)$
be the Higgs bundle corresponding to $(V,\, \nabla)$. Recall that $V$ and $W$ are the same
$C^\infty$ vector bundle with possibly different holomorphic structures. It is straightforward to
check that the Hermitian structure $\phi^*h$ on $\phi^*V$ is harmonic for the
connection $\phi^*\nabla$. Indeed, this follows from the facts that $M$ is K\"ahler and
the embedding $\phi$ in \eqref{e1} is holomorphic. Consequently, the Higgs bundle for
$(\phi^*V,\, \phi^*\nabla)$ coincides with $(\phi^*W,\, \phi^*\theta)$.

As before, let ${\mathcal B}_X(r)$ (respectively, ${\mathcal B}_M(r)$) be the Betti moduli 
space of equivalence classes of completely reducible representations of $\pi_1(X,\, x_0)$ (respectively, 
$\pi_1(M,\, \phi(x_0))$ in $\text{GL}(r, {\mathbb C})$. Let
\begin{equation}\label{e5}
\Phi\, :\, {\mathcal B}_M(r)\, \longrightarrow\, {\mathcal B}_X(r)
\end{equation}
be the map induced by $\phi_*$ in \eqref{e6}, which clearly is an algebraic map. We note that
the map $\Phi$ is injective, because the homomorphism $\phi_*$ is surjective.

Let ${\mathcal H}_X(r)$ be the moduli space of semistable Higgs bundles on $X$ of rank $r$ 
and degree zero; this moduli space was constructed in \cite{Si3}, \cite{Si4} (see \cite{CN} 
for its properties). Let ${\mathcal H}_M(r)$ be the moduli space of semistable Higgs bundles 
$(E,\, \theta)$ on $M$ of rank $r$ such that all the rational Chern classes of $E$ of 
positive degree vanish. It may be recalled that any semistable Higgs bundle $(E,\, \theta)$
satisfying \eqref{e2} has the property that all the rational Chern classes of
$E$ of positive degree vanish \cite[p.~39, Theorem 2]{Si2}. In view of Lemma \ref{3.1}, we have a map
\begin{equation}\label{e7}
\Psi\, :\, {\mathcal H}_M(r)\, \longrightarrow\, {\mathcal H}_X(r)
\end{equation}
defined by $(E,\, \theta)\, \longmapsto\, (\phi^*E,\, \phi^*\theta)$.
It is clearly an algebraic map. Indeed, the restriction map from an algebraic family of Higgs bundles on a variety to a
family of Higgs bundles on a subvariety is evidently algebraic.
As observed above, using the $C^\infty$ identification between ${\mathcal B}_X(r)$ and 
${\mathcal H}_X(r)$, and also between ${\mathcal B}_M(r)$ and ${\mathcal H}_M(r)$, the map 
$\Phi$ in \eqref{e5} coincides with the map $\Psi$ in \eqref{e7}.

As noted before, the map $\Phi$ in \eqref{e5} is injective, because $\phi_*$ is surjective. Therefore, from the 
above observation that $\Phi$ coincides with $\Psi$ using the $C^\infty$ identification between ${\mathcal B}_X(r)$ 
and ${\mathcal H}_X(r)$, and also between ${\mathcal B}_M(r)$ and ${\mathcal H}_M(r)$, we conclude that $\Psi$ is an 
embedding. As mentioned above, the map $\Phi$ is algebraic.

\subsection{HyperK\"ahler structure}

Recall from \cite{Hi1,Hi2} that the
moduli space ${\mathcal H}_X(r)$ has a natural hyperK\"ahler structure, and thus we may fix three 
complex structures $I,\,J$ and $K$ on ${\mathcal H}_X(r)$ satisfying the quaternionic equations. We shall let the complex structures $I$ and $J$ be 
the complex structures of ${\mathcal H}_X(r)$ and ${\mathcal B}_X(r)$ respectively; so $K$ is determined by the 
equation $$IJ\,=\, K\, .$$ Therefore, from the holomorphicity of the maps $\Phi$ and $\Psi$ in \eqref{e5} and \eqref{e7} 
we obtain the following:

\begin{theorem}\label{thm1}
The map $\Psi$ in \eqref{e7} makes ${\mathcal H}_M(r)$ a hyperK\"ahler subspace of
${\mathcal H}_X(r)$.
\end{theorem}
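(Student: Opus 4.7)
The plan is to verify that the image $\Psi(\mathcal{H}_M(r))\subset\mathcal{H}_X(r)$ is invariant under each of the three complex structures $I$, $J$, $K$ giving the hyperK\"ahler structure on $\mathcal{H}_X(r)$. Since $K\,=\,IJ$, it suffices to check invariance under $I$ and $J$ separately, after which the restriction of the ambient hyperK\"ahler metric automatically endows the image with a hyperK\"ahler structure.

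First, I would record what has already been established in Section \ref{Section:Restriction_to_curves}: the map $\Psi$ is an algebraic morphism (Lemma \ref{3.1} guarantees that the pulled back objects land in the target moduli space, and pullback of Higgs bundles is algebraic in families), and it is injective because it agrees with $\Phi$ under the nonabelian Hodge identification and $\Phi$ is injective by the surjectivity of $\phi_*$ in \eqref{e6}. In particular $\Psi$ is holomorphic with respect to the complex structure $I$ on both moduli spaces, so its image is a complex submanifold of $\mathcal{H}_X(r)$ with respect to $I$.

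Next, I would transport the discussion to the Betti side via the $C^\infty$ identifications $\mathcal{H}_X(r)\,\cong\,\mathcal{B}_X(r)$ and $\mathcal{H}_M(r)\,\cong\,\mathcal{B}_M(r)$ provided by Corlette--Simpson. Under these identifications the complex structure $J$ on the Higgs side corresponds to the natural complex structure on the Betti moduli spaces, and the excerpt already observes that $\Psi$ corresponds to the algebraic map $\Phi$ of \eqref{e5}. Algebraicity of $\Phi$ therefore translates to holomorphicity of $\Psi$ with respect to $J$, so the image is also a $J$-complex submanifold. At every smooth point, the tangent space is thus preserved by both $I$ and $J$, and hence also by $K\,=\,IJ$.

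The remaining step is routine: a closed $C^\infty$ submanifold of a hyperK\"ahler manifold whose tangent spaces are invariant under all three complex structures inherits a hyperK\"ahler structure by restriction of the ambient metric and complex structures, with the $J$-holomorphic and $K$-holomorphic symplectic forms restricting to the corresponding forms on the subspace. The main conceptual point, rather than an obstacle, is the compatibility of the two interpretations of $\Psi$ as an algebraic map of Higgs moduli and as the Betti-side map $\Phi$; this is exactly what the preceding paragraph on harmonic structures and hypersurfaces secures, reducing the theorem to this brief packaging.
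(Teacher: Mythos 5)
Your proposal is correct and follows essentially the same route as the paper: the paper likewise deduces the theorem directly from the $I$-holomorphicity of the algebraic map $\Psi$, the $J$-holomorphicity of the algebraic map $\Phi$ on the Betti side, their coincidence under the nonabelian Hodge $C^\infty$ identification, and the fact that invariance under $I$ and $J$ forces invariance under $K\,=\,IJ$. Your write-up merely makes explicit the final restriction-of-metric step that the paper leaves implicit.
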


\begin{remark}\label{rem1}
Consider the holomorphic symplectic form $\sigma_I$ on ${\mathcal H}_X(r)$. This
symplectic manifold $({\mathcal H}_X(r),\, \sigma_I)$ has a natural algebraically
completely integrable structure given by the Hitchin map. Although
$\Psi^*\sigma_I$ is a holomorphic symplectic structure on ${\mathcal H}_M(r)$, the
map $\Psi$ is not Poisson because it is an embedding and $\dim {\mathcal H}_X(r)
\,>\, \dim {\mathcal H}_M(r)$ in general.
\end{remark}

Given the hyperK\"ahler spaces ${\mathcal H}_M(r)$ and ${\mathcal H}_X(r)$, and the three fixed
complex structures $(I,J,K)$ as before, we shall denote by $\omega_I, \omega_J,$ and $\omega_K$ the corresponding K\"ahler forms defined by
 \[\omega_I(X,\,Y)\,=\,g(IX,\,Y), ~{~}~\omega_J(X,\,Y)\,=\,g(JX,\,Y),~{~}~\omega_K(X,\,Y)
\,=\, g(KX,\,Y)\, ,\]
where $g$ is the Riemannian metric for the hyperK\"ahler structure.
Then, the induced complex symplectic forms $\Omega_I, \Omega_J,$ and $\Omega_K$ are given by
\begin{eqnarray}\Omega_I \,=\,
\omega_J+\sqrt{-1}\omega_K, ~{~}~\Omega_J =\omega_K+\sqrt{-1}\omega_I,~{~}~\Omega_K=
\omega_I+\sqrt{-1}\omega_J.
\label{symplectic}
\end{eqnarray}

Therefore, one may consider subspaces of ${\mathcal H}_M(r)$ and ${\mathcal H}_X(r)$ which are 
Lagrangian with respect to one of the symplectic structures in \eqref{symplectic}, or 
holomorphic with respect to one of the fixed complex structures $I$, $J$ or $K$. Following \cite{kap}, we 
shall say that a Lagrangian subspace with respect to a symplectic structure is an $A$--brane, 
and a holomorphic subspace with respect to a complex structure is a $B$--brane. Hence, with 
respect to $(I,J,K)$ and $(\omega_I,\omega_J,\omega_K)$ one may have branes given by triples 
of letters. In particular, form Theorem \ref{thm1} we have the following:

\begin{corollary}\label{cor1}
The subspace ${\mathcal H}_M(r)$ is a 
$(B,\,B,\,B)$--brane in ${\mathcal H}_X(r)$.
\end{corollary}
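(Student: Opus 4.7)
The corollary is essentially a tautological consequence of Theorem \ref{thm1}, so my plan is to unpack the definitions and verify that the notions of a hyperK\"ahler subspace and a $(B,B,B)$--brane coincide on the nose.

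First I would recall what each notion means in terms of the tangent spaces. A submanifold $N \subset \mathcal{H}_X(r)$ is a hyperK\"ahler subspace exactly when, at every point $p \in N$, the tangent space $T_p N \subset T_p \mathcal{H}_X(r)$ is preserved by each of the three complex structures $I$, $J$, $K$ (this automatically makes $N$ hyperK\"ahler with the restricted metric, since then $g|_N$ is K\"ahler with respect to each of $I|_N$, $J|_N$, $K|_N$, and the quaternionic relations descend). On the other hand, following \cite{kap}, a $B$--brane with respect to a given complex structure is by definition a complex (equivalently, holomorphic) submanifold for that complex structure, i.e.\ one whose tangent spaces are invariant under the corresponding almost complex structure. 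Therefore a $(B,B,B)$--brane is precisely a submanifold whose tangent spaces are stable under $I$, $J$ and $K$ simultaneously.

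The second step is to invoke Theorem \ref{thm1} directly: via $\Psi$, the space $\mathcal{H}_M(r)$ sits inside $\mathcal{H}_X(r)$ as a hyperK\"ahler subspace. By the preceding paragraph, this immediately means that the tangent spaces of $\Psi(\mathcal{H}_M(r))$ are invariant under each of $I$, $J$, $K$, and hence $\Psi(\mathcal{H}_M(r))$ is holomorphic with respect to each of these three complex structures. By the definition of brane types recalled just before the corollary, this is the $(B,B,B)$--brane condition.

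Since the content of the statement is genuinely contained in Theorem \ref{thm1}, there is no real obstacle; the only thing one should make sure of is that the hyperK\"ahler structure on $\mathcal{H}_M(r)$ referred to in Theorem \ref{thm1} is indeed the one whose complex structures are the restrictions of $(I,J,K)$ from $\mathcal{H}_X(r)$ (so that a ``holomorphic subspace'' for complex structures downstairs is the same as one for the pulled-back complex structures), and this is built into the statement that $\Psi$ realises $\mathcal{H}_M(r)$ as a hyperK\"ahler \emph{subspace} rather than just a hyperK\"ahler manifold admitting a holomorphic embedding. Given this, the corollary follows in one line.
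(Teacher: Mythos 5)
Your proposal is correct and coincides with the paper's own treatment: the paper gives no separate proof of Corollary \ref{cor1}, deducing it immediately from Theorem \ref{thm1} together with the definitions of $A$-- and $B$--branes recalled just before the statement, which is exactly the identification (hyperK\"ahler subspace $=$ tangent spaces preserved by $I$, $J$, $K$ $=$ holomorphic for all three complex structures) that you spell out. Your additional remark that ``hyperK\"ahler subspace'' must mean the restricted structures, not merely an abstract hyperK\"ahler manifold with a holomorphic embedding, is a sensible precision that the paper leaves implicit.
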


\section{Deligne--Hitchin moduli space and twistor space}\label{Del}

In what follows we shall first recall the notion of $\lambda$--connections and the 
Deligne--Hitchin moduli space (for details and proofs see Simpson \cite{Si5}). Then, we shall 
look into the relationship of these spaces with the twistor spaces of certain hyperK\"ahler 
manifolds.

\subsection{Deligne--Hitchin moduli spaces}

Let $M$ be an irreducible smooth complex projective variety of dimension $d$. We fix a 
$C^\infty$ complex vector bundle $V\,\longrightarrow\, M$ such that all its rational Chern 
classes of positive degree vanish. A $\lambda$--connection on $V\,\longrightarrow\, M$ is a 
triple $(\lambda,\, \overline{\partial},\, D)$ consisting of a complex number $\lambda$, a 
holomorphic structure $\overline\partial$ on $M$, i.e., $\overline\partial^2\,=\,0$, and a 
holomorphic first order differential operator $D$ satisfying the following two conditions:
\begin{itemize}
\item $D^2\,=\,0$, and

\item $D(fs)\,=\,\lambda(\partial f )s+f Ds$ (similar to the Leibniz rule), where $f$ is any locally defined
holomorphic function and $s$ is any locally defined holomorphic section of $V$.
\end{itemize}

\begin{remark}A $\lambda$--connection for $\lambda\,=\,0$ is a Higgs bundle
$(\overline\partial ,\, D)$, and
for $\lambda\,=\,1$ it is a flat holomorphic connection $D+\overline\partial$.
\end{remark}

Consider the moduli space $\mathcal M^{Hod}_M(r)$ of completely reducible $\lambda$--connections
(to clarify, $\lambda$ is not fixed), and let
$$
f_M\, :\, \mathcal M^{Hod}_M(r)\,\longrightarrow\,\mathbb C\, ,\ \
(\lambda,\, \overline{\partial},\, D)\, \longmapsto\, \lambda
$$
be the fibration. The analogous construction on the complex
conjugate space $\overline M$ (the $C^\infty$ manifold underlying $\overline M$ is $M$ itself while
the complex structure on it is $-J_M$, where $J_M$ is the complex structure on $M$) gives a fibration
$$
f_{\overline{M}}\, :\, \mathcal M^{Hod}_{\overline{M}}(r)\,\longrightarrow\,\mathbb C\, .
$$
Moreover, there is the following gluing constructed by Deligne:
\begin{eqnarray}\varphi\,\colon\, \mathcal M^{Hod}_{M}(r)_{\mid \mathbb C^*}&\,\longrightarrow\, & 
\mathcal M^{Hod}_{\overline{M}}(r)_{\mid \mathbb C^*}\nonumber\\
(\lambda,\,\overline{\partial},\,D)_M&\,\longmapsto\,&
(\tfrac{1}{\lambda},\,\tfrac{1}{\lambda}D,\,\tfrac{1}{\lambda}\overline{\partial})_{\overline{M}}\, ;
\nonumber\end{eqnarray}
this covers the map $\mathbb C^*\,\longrightarrow\,\mathbb C^*$ defined by
$\lambda\,\longmapsto\,\tfrac{1}{\lambda}$.

The Deligne--Hitchin moduli space $$\mathcal M^{DH}_M(r)\,=\,\mathcal M^{Hod}_{ M}(r)\cup_\varphi {\mathcal
M}^{Hod}_{\overline{M}}(r)$$
is obtained by gluing the two Hodge moduli spaces via $\varphi$. It admits a fibration to ${\mathbb C}
{\mathbb P}^1$ which is $f_M$ on $\mathcal M^{Hod}_{ M}(r)$ and $1/f_{\overline{M}}$ on
$\mathcal M^{Hod}_{\overline{M}}(r)$. It should be mentioned that there does not exist a natural algebraic
structure on Deligne--Hitchin moduli spaces.

\subsection{Twistor space}

The twistor space of a hyperK\"ahler manifold $(N,\,g,\,I,\,J,\,K)$ encodes the hyperK\"ahler 
geometry in terms of complex analytic data. As a complex manifold it is given by $\mathcal 
P\,=\,N\times \mathbb C\mathbb P^1$ with (integrable) complex structure at 
$(p,\,\lambda)\,=(p,\,a+\sqrt{-1}b)$ given by
\[I_{(p,a+\sqrt{-1} b)}\,=\,(1-a\, K_p+b\,J_p)^{-1}I_p(1-a\, K_p+b\,J_p).\]
There is the holomorphic fibration $\mathcal P\,\longrightarrow\,\mathbb C\mathbb P^1,$ and
the manifold $N$ can be recovered as the space of {\em twistor lines}, i.e., as a component of holomorphic
sections of $\mathcal P\,\longrightarrow\,\mathbb C\mathbb P^1$ which are real with respect to the real structures
\[\rho\,\colon\, \mathcal P\,\longrightarrow\,\mathcal P\, ,\ \ (p,\,\lambda)\,\longmapsto\,
(p,\,-\overline{\lambda}^{-1})\]
$$
\mathbb C\mathbb P^1\, \longrightarrow\, \mathbb C\mathbb P^1,\, \ \ \lambda\,\longmapsto\, -\overline{\lambda}^{-1}
$$
of $\mathcal P$ and $\mathbb C\mathbb P^1$.

The various complex structures on $N$ are obtained by evaluating at specific $\lambda\,\in
\, \mathbb C\mathbb P^1.$ Moreover, the Riemannian metric
can be computed from a holomorphic twisted symplectic form on $\mathcal P$; see \cite{HKLR}.

As shown in \cite[Theorem 4.2]{Si5}, the Deligne--Hitchin moduli space is the twistor space of the hyperK\"ahler
space $\mathcal H_M(r)$.
The real structure $\rho$ on $\mathcal M^{DH}_M(r)$ is given by
\[\rho((\lambda,\,\overline{\partial},\,D)_M)\,=\,(-\overline{\lambda}^{-1},\,
\overline{\partial}^*,\,- D^*)_{\overline{M}}\, ,\]
where $\overline{\partial}^*$ and $D^*$ are the adjoint operators with respect to a Hermitian metric $h$ on the
underlying vector bundle $V$ (for the operators $\overline{\partial}$ and $D$).

Consider a polystable Higgs bundle $(E,\, \theta)$ on $M$ satisfying \eqref{e2}, and let $h$ 
be a Hermitian structure on $E$ that satisfies the Hermitian--Yang--Mills equation in 
\eqref{e3} for the Higgs bundle $(E,\, \theta)$. Let $(\nabla^h)^{0,1}$ be the holomorphic 
structure on $E$ determined by $E$. Then, the twistor line through the point 
$p\,=\,((\nabla^h)^{0,1},\,\theta)\,\in\, \mathcal H_M(r)$, over the open subset $\mathbb C\,\subset\,{\mathbb 
C}{\mathbb P}^1$, is given by
\[\lambda\,\longmapsto\, (\lambda,\,(\nabla^h)^{0,1}+\lambda\theta^*,\,\theta+\lambda (\nabla^h)^{1,0})\, ;\]
see \cite{Si5}. Note that this family interpolates between the Higgs pair $(E,\,\theta)$ at 
$\lambda\,=\,0$ and the corresponding flat connection \eqref{e4} at $\lambda\,=\,1$.

\subsection{Twistor subspaces}

Let
\begin{equation}\label{epi}
\pi\,\colon\,\mathcal P\,\longrightarrow\, \mathbb C\mathbb P^1
\end{equation}
be a twistor space of a 
hyperK\"ahler space $M.$ We call a complex subspace $\mathcal N\,\subset\,\mathcal P$ a 
twistor subspace if the following condition holds: whenever $s(\lambda_0)\,\in\, \mathcal N$ for a twistor line $s$ and some 
$\lambda_0\,\in\, \mathbb C\mathbb P^1$, then $s(\lambda)\, \in\, \mathcal N$ for all 
$\lambda\,\in\, {\mathbb C}{\mathbb P}^1.$

If $\mathcal N\,\subset\,\mathcal P$ is a twistor 
subspace then the restriction $\pi_{\mid \mathcal N}$ is a fibration and $$\rho(\mathcal N)\,=\,\mathcal N\, .$$ 
Moreover, the twisted holomorphic symplectic structure on $\mathcal P$ in \eqref{epi}
remains non-degenerate when restricted to the vertical tangent bundle 
of $\mathcal N.$ It follows that there exists a hyperK\"ahler subspace $N \,\subset\, M$ 
whose twistor space is given by $\mathcal N$. Conversely, every hyperK\"ahler subspace $N\, 
\subset\, M$ gives rise to a twistor subspace of $\mathcal P$ in \eqref{epi}.

The construction of the Deligne--Hitchin moduli space can be applied to the projective curve
$\phi\, :\, X\, \hookrightarrow\, M$ with induced surjective map
$$\phi_*\, :\, \pi_1(X,\, x_0)\, \longrightarrow\, \pi_1(M,\, x_0)$$ as in Section
\ref{Section:Restriction_to_curves}.
In that set-up, let
\begin{equation}\label{etwistz}
\Upsilon\, :\, {\mathcal M}^{DH}_M(r)\, \longrightarrow\, {\mathcal M}^{DH}_X(r)
\end{equation}
be the map defined by $(\lambda,\, E,\, D)\, \longmapsto\, (\lambda, \,\phi^*E,\, \phi^*D)$.
Theorem \ref{thm1} implies the following:

\begin{theorem}\label{thm2}
The map $\Upsilon$ in \eqref{etwistz} makes ${\mathcal M}^{DH}_M(r)$ a twistor subspace of
${\mathcal M}^{DH}_X(r)$.
\end{theorem}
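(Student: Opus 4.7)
The plan is to reduce everything to Theorem \ref{thm1} via the twistor-space description of the Deligne--Hitchin moduli space. First I would check that $\Upsilon$ is well-defined on $\lambda$-connections: if $(\lambda,\,\overline{\partial},\,D)$ is a $\lambda$-connection on $V\,\to\, M$, then $(\phi^*\overline{\partial})^2\,=\,\phi^*(\overline{\partial}^2)\,=\,0$, $(\phi^*D)^2\,=\,0$, and the twisted Leibniz rule $\phi^*D(fs)\,=\,\lambda(\partial f)s+f\,\phi^*D s$ is preserved because $\phi$ is holomorphic; polystability of the pullback at $\lambda\,=\,0$ is Lemma \ref{3.1}, and the corresponding statement for general $\lambda$ follows from the same harmonic-metric argument. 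The map $\Upsilon$ is also compatible with the Deligne gluing: since $\varphi$ is given by the intrinsic algebraic formula $(\lambda,\,\overline{\partial},\,D)\,\longmapsto\, (1/\lambda,\,(1/\lambda)D,\,(1/\lambda)\overline{\partial})$ on both $M$ and $X$, one has $\varphi_X\circ\Upsilon\,=\,\Upsilon\circ\varphi_M$, so $\Upsilon$ descends to a holomorphic map between the glued spaces and covers the identity on $\mathbb{C}\mathbb{P}^1$.

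Next I would use the identification of $\mathcal M^{DH}_M(r)$ and $\mathcal M^{DH}_X(r)$ with the twistor spaces of $\mathcal H_M(r)$ and $\mathcal H_X(r)$ respectively, together with the explicit formula recalled above: the real twistor line through a point $p\,=\,((\nabla^h)^{0,1},\,\theta)\,\in\,\mathcal H_X(r)$ is
\[\lambda\,\longmapsto\,(\lambda,\,(\nabla^h)^{0,1}+\lambda\theta^*,\,\theta+\lambda(\nabla^h)^{1,0}).\]
By Theorem \ref{thm1}, $\Psi(\mathcal H_M(r))\,\subset\,\mathcal H_X(r)$ is a hyperK\"ahler subspace, so for $p$ in this subspace one may choose a harmonic Hermitian structure $h$ pulled back from $M$; the facts that the Chern connection decomposes naturally under holomorphic pullback and that the Hermitian adjoint commutes with $\phi^*$, i.e.\ $\phi^*(\theta^*)\,=\,(\phi^*\theta)^*$ (as already observed in the proof of Lemma \ref{3.1}), show that the twistor line through such a $p$ is exactly the $\Upsilon$-image of the twistor line through the preimage of $p$ in $\mathcal H_M(r)$. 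Hence twistor lines through points of the $\lambda\,=\,0$ fiber of $\Upsilon(\mathcal M^{DH}_M(r))$ lie entirely in $\Upsilon(\mathcal M^{DH}_M(r))$.

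To close the argument I would use that every real twistor line in $\mathcal M^{DH}_X(r)$ meets the $\lambda\,=\,0$ fiber $\mathcal H_X(r)$ at a unique point, and this point determines the whole line. Thus, if a real twistor line $s$ satisfies $s(\lambda_0)\,\in\,\Upsilon(\mathcal M^{DH}_M(r))$ for some $\lambda_0$, then its value $s(0)\,\in\,\mathcal H_X(r)$ necessarily lies in $\Psi(\mathcal H_M(r))$ (this is where the injectivity of $\Psi$ from Section \ref{Section:Restriction_to_curves} is used), and the previous step concludes. The main obstacle I anticipate is not conceptual but technical, namely verifying carefully that every relevant operation---forming the Chern connection, taking the $(1,0)$ and $(0,1)$ parts, and taking the Hermitian adjoint of $\theta$ or of $\overline{\partial}$ and $D$---is compatible with pullback along the holomorphic embedding $\phi$; once this functoriality is in hand, the theorem follows immediately from Theorem \ref{thm1} combined with Simpson's identification of $\mathcal M^{DH}$ with a twistor space.
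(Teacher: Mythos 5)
Your proposal is correct and takes essentially the same route as the paper: the paper deduces Theorem \ref{thm2} in one line from Theorem \ref{thm1} together with the correspondence between hyperK\"ahler subspaces and twistor subspaces set up in the preceding subsection, and your argument simply unpacks that correspondence via Simpson's identification of ${\mathcal M}^{DH}$ with the twistor space and the explicit twistor-line formula. The compatibilities you verify (pullback of $\lambda$--connections, the Deligne gluing, $\phi^*(\theta^*)=(\phi^*\theta)^*$, and uniqueness of the twistor line through a point) are exactly the implicit content of the paper's appeal to that correspondence, though note that the injectivity of $\Psi$ is really what makes the image an embedded subspace rather than what forces $s(0)$ into $\Psi({\mathcal H}_M(r))$ --- the latter follows from matching $s$ with the $\Upsilon$--image of the unique twistor line through a preimage point.
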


\section{Real structures}\label{sec:real}

We shall dedicate this section to the study of real structures and their fixed point sets, 
and in particular, their appearance in the context of Higgs bundles.

\subsection{Real structures and Higgs bundles}

As before, $M$ is an irreducible smooth complex projective variety of dimension $d$. Let 
\begin{equation}\label{si}
\sigma\, :\, M\, \longrightarrow\, M
\end{equation}
be an anti-holomorphic involution of $M$. 

\begin{lemma}\label{lem12}
It is possible to choose $\phi$ in \eqref{e1} such that
\begin{enumerate}
\item $\sigma(\phi(X))\,=\, \phi(X)$, and 

\item the homomorphism $\phi_*\, :\, \pi_1(X,\, x_0)\, \longrightarrow\, \pi_1(M,\, x_0)$ 
is surjective.
\end{enumerate}\end{lemma}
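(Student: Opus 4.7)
The plan is to construct $X$ as a complete intersection of $d-1$ smooth $\sigma$-invariant very ample hypersurfaces in $M$, so that condition~(1) holds by construction and condition~(2) follows from the homotopical Lefschetz hyperplane theorem applied inductively, exactly as recalled at the start of Section~\ref{restriction}. To carry this out, I first need a very ample line bundle $L$ on $M$ that is $\sigma$-compatible in the sense that $\overline{\sigma^{*}L}\,\cong\, L$. Starting from any very ample line bundle $L_0$ on $M$, the pullback $\sigma^{*}L_0$ is naturally anti-holomorphic (because $\sigma$ is anti-holomorphic), so $\overline{\sigma^{*}L_0}$ is a holomorphic line bundle, and it is very ample because it is obtained by composing the embedding given by $L_0$ with $\sigma$ and with complex conjugation on projective space. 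Setting $L\,:=\,L_0\otimes\overline{\sigma^{*}L_0}$ (possibly after replacing $L_0$ by a sufficiently high tensor power, so that restrictions at later stages remain very ample) yields such a line bundle, and the tautological identification $\overline{\sigma^{*}L}\,\cong\, L$ equips $H^0(M,\, L)$ with a canonical anti-linear involution $\iota$ whose fixed locus $V$ is a real form of real dimension equal to $\dim_{\mathbb{C}}H^0(M,\, L)$.

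The second step is an equivariant Bertini argument. The discriminant $\Delta\,\subset\,\mathbb{P}(H^0(M,\, L))$ parametrising singular hypersurfaces is a proper complex algebraic subvariety which is manifestly $\iota$-invariant, so $\Delta\cap\mathbb{P}(V)$ is a proper real algebraic subvariety of $\mathbb{P}(V)$. A generic $s\,\in\,V$ therefore cuts out a smooth hypersurface $H_1$, automatically $\sigma$-invariant because $s$ is. The restriction $L|_{H_1}$ is a very ample $\sigma|_{H_1}$-invariant line bundle to which the same argument applies, and iterating this construction $d-1$ times produces the desired smooth $\sigma$-invariant complete intersection curve $X$. Picking any $x_0\,\in\,X$ and applying the Lefschetz theorem at each stage of the induction yields the surjectivity of $\phi_{*}\,:\,\pi_1(X,\, x_0)\,\longrightarrow\,\pi_1(M,\, x_0)$.

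The main obstacle is the equivariant Bertini step: one must know that the real form $V$ is large enough to meet the smooth locus in $|L|$. This is automatic in our setting because $\iota$ is anti-linear, which forces $V$ to be Zariski-dense in $H^0(M,\, L)$, so it cannot be contained in the positive-codimension subvariety $\Delta$; concretely, $\Delta$ being $\iota$-invariant is cut out locally by $\iota$-equivariant holomorphic equations, and these descend to real equations of positive codimension on $\mathbb{P}(V)$. The only subtlety is to ensure that $L|_{H_i}$ stays sufficiently ample throughout the induction, which is arranged upfront by choosing $L_0$ large enough.
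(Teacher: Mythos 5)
Your proposal is correct and follows essentially the same route as the paper: both construct a $\sigma$-compatible very ample line bundle of the form $L_0\otimes\overline{\sigma^{*}L_0}$ (the paper writes ${\mathcal L}\otimes\sigma^{*}\overline{\mathcal L}$), use the induced anti-linear involution on $H^0$ to get a totally real, Zariski-dense subspace of sections, cut out $X$ by general real sections, and invoke the Lefschetz hyperplane theorem for surjectivity of $\phi_*$. The only differences are cosmetic: the paper takes the intersection of $d-1$ general real divisors in one Bertini step rather than iterating hypersurface by hypersurface, and your precaution of raising $L_0$ to a high power is unnecessary, since the restriction of a very ample bundle to a smooth subvariety is automatically very ample.
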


\begin{proof}
Let $\mathcal L$ be a very ample line bundle on $M$. Then the holomorphic
line bundle $\sigma^*\overline{\mathcal L}$ on $M$ is also ample. The very ample line bundle
$$
{\mathbb L}\, :=\, {\mathcal L}\otimes \sigma^*\overline{\mathcal L}
$$
is equipped with a lift of the involution $\sigma$. This involution of $\mathbb L$,
which we shall denote by $\widetilde{\sigma}$, sends any $v_1\otimes v_2\, \in\,
{\mathbb L}_x\,=\, {\mathcal L}_x\otimes\overline{\mathcal L}_{\sigma(x)}$ to
$v_2\otimes v_1\, \in \, {\mathbb L}_{\sigma(x)}\,=\,
{\mathcal L}_{\sigma(x)}\otimes\overline{\mathcal L}_x$. Moreover, $\widetilde{\sigma}$
produces a conjugate linear involution
$$
\eta\, :\, H^0(M,\, {\mathbb L}) \, \longrightarrow\, H^0(M,\, {\mathbb L})
$$
that sends any $s\, \in \, H^0(M,\, {\mathbb L})$ to $\widetilde{\sigma}(s)$. The
fixed point set
$$
H^0(M,\, {\mathbb L})^\eta \, \subset\, H^0(M,\, {\mathbb L})
$$
is a totally real subspace, meaning $\dim_{\mathbb R} H^0(M,\, {\mathbb L})^\eta \, =
\,\dim_{\mathbb C} H^0(M,\, {\mathbb L})$, and
$$
H^0(M,\, {\mathbb L}) \,= \, H^0(M,\, {\mathbb L})^\eta\oplus 
\overline{H^0(M,\, {\mathbb L})^\eta}\, .
$$
Now the intersection of divisors of general $d-1$ elements of $H^0(M,\, {\mathbb L})^\eta$
is a smooth closed curve $Y$ in $M$ such that 
$\sigma(Y)\,=\, Y$, and the homomorphism $\pi_1(Y)\, \longrightarrow\, \pi_1(M)$ induced by the inclusion map
of $Y$ in $M$ is surjective, as required.
\end{proof}
 
Fix a smooth closed curve $X$ as in \eqref{e1}
such that
\begin{enumerate}
\item $\sigma(\phi(X))\,=\, \phi(X)$, and 

\item the homomorphism $\phi_*\, :\, \pi_1(X,\, x_0)\, \longrightarrow\, \pi_1(M,\, x_0)$
induced by $\phi$ in \eqref{e1} is surjective, where $x_0\, \in\, X$.
\end{enumerate}
We shall denote the restriction of the map $\sigma$ to $X$ by $\tau$.
The anti-holomorphic involution $\tau$ on $X$ produces a $C^\infty$ involution
\begin{equation}\label{ix}
{\mathcal I}_X\, :\, {\mathcal B}_X(r)\, \longrightarrow\, {\mathcal B}_X(r)
\end{equation}
of ${\mathcal B}_X(r)$ in \eqref{e5} that sends a flat connection $(E,\, \nabla)$ on $X$
to $(\tau^*E,\, \tau^*\nabla)$.
The anti-holomorphic involution $\sigma$ on $M$ produces a $C^\infty$ involution
\begin{equation}\label{im}
{\mathcal I}_M\, :\, {\mathcal B}_M(r)\, \longrightarrow\, {\mathcal B}_M(r)
\end{equation}
of ${\mathcal B}_M(r)$ in \eqref{e5} that sends a flat connection $(E,\, \nabla)$ on $X$
to $(\sigma^*E,\, \sigma^*\nabla)$.

From the above, the following is straightforward to prove.

\begin{theorem}\label{prop1}
For any $S\,\in\,\{X,\,M\}$, the involution ${\mathcal I}_S$ in $\{$\eqref{ix}, \eqref{im}$\}$ is
anti-holomorphic with respect to the
complex structure $I$ (that gives the complex structure of the moduli space
of Higgs bundles ${\mathcal H}_S(r)$).
The involution ${\mathcal I}_S$ is holomorphic with respect to the
complex structure $J$ (that gives the complex structure of
${\mathcal B}_S(r)$), and thus the fixed point set ${\mathcal H}_S(r)^{{\mathcal I}_S}$ is a
$(A,\,B,\,A)$--brane in the corresponding moduli space ${\mathcal H}_S(r)$.
Furthermore,
$$
{\mathcal I}_X\circ\Psi\,=\, \Psi\circ{\mathcal I}_M\, ,
$$
where $\Psi$ is the embedding in \eqref{e7}. 
\end{theorem}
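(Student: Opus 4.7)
The plan is to verify the three listed properties of $\mathcal{I}_S$ separately and then check the compatibility with $\Psi$ by a direct computation. The natural strategy is to treat $J$-holomorphicity using the Betti picture and $I$-anti-holomorphicity using the Dolbeault picture, since in each case the relevant complex structure has the simplest description there; anti-holomorphicity in $K$ then follows formally from $K=IJ$.

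For $J$-holomorphicity I would pass to the Betti side. After choosing a path from $x_0$ to $\sigma(x_0)$ (only needed when $x_0$ is not $\sigma$-fixed), the involution $\sigma$ induces an automorphism $\sigma_*$ of $\pi_1(M,x_0)$, well-defined up to inner automorphism and hence well-defined on $\mathcal{B}_M(r)$; analogously $\tau_*$ for $\mathcal{B}_X(r)$. Under the Riemann--Hilbert identification, $\mathcal{I}_S$ becomes $[\rho]\mapsto [\rho\circ\sigma_*]$ (or $[\rho\circ\tau_*]$), which is a polynomial map in the matrix entries of $\rho$ evaluated on a generating set of $\pi_1$, and hence defines a morphism of affine $\mathbb{C}$-schemes. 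Since $J$ is the complex structure of $\mathcal{B}_S(r)$ viewed as a scheme over $\mathbb{C}$, this shows that $\mathcal{I}_S$ is $J$-holomorphic.

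For $I$-anti-holomorphicity I would pass to the Higgs picture. Given a polystable Higgs bundle $(E,\theta)$ satisfying \eqref{e2} with harmonic metric $h$, the associated flat connection $\nabla^h+\theta+\theta^*$ from \eqref{e4} is pulled back by $\sigma$ to $\sigma^*\nabla^h+\sigma^*\theta+\sigma^*\theta^*$, and $\sigma^*h$ remains harmonic (after averaging the K\"ahler metric to make $\sigma$ an isometry, which is permissible since harmonicity of $F^h$ is independent of the chosen K\"ahler form). Because $\sigma$ is anti-holomorphic, $\sigma^*$ swaps the $(1,0)$- and $(0,1)$-types of differential forms on $M$; decomposing the pulled-back flat connection into Chern part and Higgs part for $\sigma^*h$, the new Dolbeault operator on $E$ is $\sigma^*((\nabla^h)^{1,0})$ and the new Higgs field is $\sigma^*\theta^*$. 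The key point, and the main technical step, is that $(\nabla^h)^{1,0}$ and $\theta^*$ are Hermitian adjoints of the defining data $(\nabla^h)^{0,1}$ and $\theta$, so the infinitesimal variation of $\mathcal{I}_S(E,\theta)$ depends $\mathbb{C}$-antilinearly on $(\delta(\nabla^h)^{0,1},\delta\theta)$; this identifies $\mathcal{I}_S$ as $I$-anti-holomorphic.

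From $J$-linearity and $I$-antilinearity of $d\mathcal{I}_S$, one has $d\mathcal{I}_S\circ K=d\mathcal{I}_S\circ(IJ)=-I\circ d\mathcal{I}_S\circ J=-K\circ d\mathcal{I}_S$, so $\mathcal{I}_S$ is also $K$-anti-holomorphic. The fixed locus is therefore totally real for both $I$ and $K$ (hence Lagrangian for $\omega_I$ and $\omega_K$, since its real dimension equals half that of the ambient moduli space) and a complex submanifold for $J$; this is an $(A,B,A)$-brane in the sense of \cite{kap}. Finally, the commutativity $\mathcal{I}_X\circ\Psi=\Psi\circ\mathcal{I}_M$ reduces to the identity $\sigma\circ\phi=\phi\circ\tau$, which holds by the definition $\tau=\sigma|_X$: both composites send a flat bundle $(E,\nabla)$ on $M$ to $((\sigma\circ\phi)^*E,(\sigma\circ\phi)^*\nabla)$ on $X$.
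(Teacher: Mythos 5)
Your proposal is correct, and in fact it supplies the details that the paper leaves out: the paper's proof of Theorem \ref{prop1} is literally the sentence ``From the above, the following is straightforward to prove,'' followed (in the next subsection) by a twistor-theoretic packaging of the same facts. There, the anti-holomorphic involution is viewed as a holomorphic isomorphism $M\,\stackrel{\sim}{\longrightarrow}\,\overline{M}$, inducing the commuting square \eqref{eq:les2} of Deligne--Hitchin moduli spaces; evaluating at $\lambda\,=\,1$ gives the $J$-holomorphicity (your Betti argument, since ${f^{M,\overline{M}}}_{\mid\lambda=1}$ is the algebraic map $[\rho]\,\longmapsto\,[\rho\circ\sigma_*]$), and evaluating at $\lambda\,=\,0$, together with the fact that the twistor fibers over $0$ and $\infty$ are complex conjugates, gives the $I$-anti-holomorphicity (your type-swapping computation with the harmonic metric, which is exactly why the involution lands in ${\mathcal H}_{\overline{M}}(r)\,\cong\,\overline{{\mathcal H}_M(r)}$ rather than ${\mathcal H}_M(r)$ holomorphically). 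So the underlying mathematics is the same; what your direct route buys is an explicit identification of the image Higgs data $\bigl(\sigma^*((\nabla^h)^{1,0}),\,\sigma^*(\theta^*)\bigr)$, the formal deduction of $K$-anti-holomorphicity from $K\,=\,IJ$, and a spelled-out verification of the brane conditions; what the paper's route buys is uniformity in $\lambda$ (the involution is seen at once on the whole twistor family, consistent with Theorem \ref{thm2}) without any infinitesimal computation.

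Two small points to tighten. First, the Lagrangian conclusion needs the caveat that the fixed locus may be empty, and where nonempty your dimension count uses the standard fact that the fixed set of an anti-holomorphic \emph{isometric} involution is totally real of maximal dimension -- so the averaging step making $\sigma$ an isometry (and hence $\mathcal{I}_S^*\omega_I\,=\,-\omega_I$, $\mathcal{I}_S^*\omega_K\,=\,-\omega_K$) is genuinely needed there, not only for harmonicity; you invoke it, but it should be attached to this step as well. Second, in the final commutation you verify ${\mathcal I}_X\circ\Phi\,=\,\Phi\circ{\mathcal I}_M$ on flat connections, while the statement concerns $\Psi$ on Higgs moduli; this is fine, but one should say explicitly that it transports to $\Psi$ via the $C^\infty$ identification of ${\mathcal B}_S(r)$ with ${\mathcal H}_S(r)$ under which $\Phi$ coincides with $\Psi$ (Section \ref{Section:Restriction_to_curves}), using that $\phi^*$ of a harmonic metric is harmonic. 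Alternatively, the identity $\sigma\circ\phi\,=\,\phi\circ\tau$ gives the Higgs-side statement directly by the same one-line computation applied to $(\phi^*E,\,\phi^*\theta)$.
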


\begin{remark}
When the variety $M$ is a Riemann surface, the involution ${\mathcal I}_M$ gives the map $f$ 
in \cite{aba} fixing an $(A,\,B,\,A)$--brane in the moduli space of Higgs bundles on $M$. 
Moreover, in such a situation ${\mathcal I}_M$ can naturally be seen as part of a triple of 
involutions fixing Lagrangian subspaces, as shown in \cite{slices}.
\end{remark}

\subsection{Real structures and $\lambda$--connections}

Let $X\,\hookrightarrow\, M$ be an embedded real curve as above, and let $\tau$ and $\sigma$ be the 
corresponding real structures. Note that for another irreducible smooth complex projective 
variety $N$ and an algebraic map $f\,\colon\, N \,\longrightarrow\, M$, we obtain an induced
map
$$
f^{M,N}\, :\, {\mathcal M}^{DH}_M(r)\, \longrightarrow\, {\mathcal M}^{DH}_N(r)
$$
generalizing $\Upsilon$ in \eqref{etwistz}. Thus, the commuting diagram 
$$
\begin{tikzcd}
X \ar[r, "\sim"] \ar[d, hookrightarrow] & \overline{X} \ar[d, hookrightarrow] \\
M \ar[r, "\sim"] & \overline{M} 
\end{tikzcd}
$$
yields for the twistor fibrations
\begin{equation}\label{eq:les2}
\begin{tikzcd}
{\mathcal M}^{DH}_M(r) \ar[r, "\sim"] \ar[d, hookrightarrow] & {\mathcal M}^{DH}_{\overline{M}}(r) \ar[d, hookrightarrow] \\
{\mathcal M}^{DH}_X(r) \ar[r, "\sim"] & {\mathcal M}^{DH}_{\overline{X}}(r).
\end{tikzcd}
\end{equation}
In particular, at $\lambda\,=\,1$ we obtain 
$$
\begin{tikzcd}
{\mathcal B}_M(r)\ar[rr, "I_M={f^{M,\overline{M}}}_{\mid\lambda=1}"] \ar[d,"\Phi" ]& &{\mathcal B}_{\overline{M}}(r) \ar[d, "\Phi"]\\
{\mathcal B}_X(r) \ar[rr, "I_X={f^{X,\overline{X}}}_{\mid\lambda=1}"]& &{\mathcal B}_{\overline{X}}(r)
\end{tikzcd}
$$
giving the $J$--part of Theorem \ref{prop1}. Recall that 
the fibers of a twistor space at $\lambda\,=\,0$ and $\lambda\,=\,\infty$ are complex conjugate spaces.
Hence, evaluating \eqref{eq:les2} at $\lambda\,=\,0$, we obtain the commutative diagram
$$
\begin{tikzcd}
{\mathcal H}_M(r)\ar[rr, "{f^{M,\overline{M}}}_{\mid\lambda=0}"] \ar[d,"\Psi" ] 
& &{\mathcal H}_{\overline{M}}(r) \ar[rr, "\sim"] \ar[d, "\Psi"] 
& & \overline{ {\mathcal H}_M(r)} \ar[d]\\
{\mathcal H}_X(r) \ar[rr, "{f^{X,\overline{X}}}_{\mid\lambda=1}"] & &{\mathcal H}_{\overline{X}}(r) \ar[rr, "\sim"] & & \overline{ {\mathcal H}_X(r)}
\end{tikzcd}
$$
which is in full accordance with the $I$--part of Theorem \ref{prop1}.

\section{Holomorphic action of a finite group}\label{sec:finite}

As in previous sections, let $M$ be an irreducible smooth complex projective variety of
complex dimension $d$, and consider a finite group $\Gamma$ acting
faithfully on $M$ via holomorphic automorphisms of $M$. For any $\gamma\, \in\, \Gamma$, the automorphism
of $M$ given by the action of $\gamma$ will also be denoted by $\gamma$.
For a very ample line bundle $L$ on $M$, the holomorphic line bundle
$$
{\mathcal L}\, :=\, \bigotimes_{\gamma\in\Gamma} \gamma^* L
$$
is both very ample and $\Gamma$--equivariant.

\subsection{Actions on higher dimensional varieties}

Consider the action of $\Gamma$ on $H^0(M,\, {\mathcal L})$ given by the action of
$\Gamma$ on $\mathcal L$. For general $d-1$ elements $s_1,\, \cdots ,\, s_{d-1}$ of
$H^0(M,\, {\mathcal L})^\Gamma$, the intersection
$$
X\, =\, \prod_{i=1}^{d-1} \text{divisor}(s_i)\, \subset\, M
$$
is a smooth projective curve satisfying the following two conditions:
\begin{itemize}
\item the homomorphism $\pi_1(X,\, x_0)\, \longrightarrow\, \pi_1(M,\, x_0)$ induced by the inclusion
map
$$
X\, \hookrightarrow\, M
$$
is surjective, where $x_0\, \in\, X$, and

\item the action of $\Gamma$ on $M$ preserves the curve $X$.
\end{itemize}

Fix a smooth closed curve $X$ as in \eqref{e1}
such that
\begin{enumerate}
\item the action of $\Gamma$ on $M$ preserves $\phi(X)$, and 

\item the homomorphism $\phi_*\, :\, \pi_1(X,\, x_0)\, \longrightarrow\, \pi_1(M,\, x_0)$ 
induced by $\phi$ in \eqref{e1} is surjective, where $x_0\, \in\, X$.
\end{enumerate}

\begin{lemma}
There is an embedding of the $\Gamma$--fixed locus ${\mathcal H}_M(r)^\Gamma$ in ${\mathcal H}_X(r)^\Gamma$ induced by the map $\Psi$ in \eqref{e7}. 
\end{lemma}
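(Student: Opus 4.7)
The plan is to show that the lemma follows formally from two facts: $\Psi$ is equivariant for the two $\Gamma$-actions on ${\mathcal H}_M(r)$ and ${\mathcal H}_X(r)$, and $\Psi$ is itself an embedding (already established in Section \ref{Section:Restriction_to_curves}). The restriction of a $\Gamma$-equivariant embedding to the fixed loci is again an embedding, which is all we need.

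First I would record the two $\Gamma$-actions explicitly. Since $\Gamma$ acts on $M$ by holomorphic automorphisms preserving $\phi(X)$, each $\gamma\in\Gamma$ restricts to a holomorphic automorphism $\gamma_X\,:=\,\gamma|_X$ of $X$, and the induced actions on the Higgs moduli spaces are
\[
\gamma\cdot(E,\,\theta)\,:=\,(\gamma^*E,\,\gamma^*\theta)\quad\text{on }{\mathcal H}_M(r),\qquad
\gamma\cdot(F,\,\eta)\,:=\,(\gamma_X^*F,\,\gamma_X^*\eta)\quad\text{on }{\mathcal H}_X(r).
\]
These are well-defined, because pullback by an automorphism preserves polystability, the vanishing of rational Chern classes in positive degree, and the Higgs integrability condition.

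Next I would verify the equivariance of $\Psi$ using the identity $\gamma\circ\phi\,=\,\phi\circ\gamma_X$, which holds because $\phi(X)$ is $\Gamma$-invariant. For any $(E,\,\theta)\,\in\,{\mathcal H}_M(r)$,
\[
\Psi(\gamma\cdot(E,\,\theta))\,=\,(\phi^*\gamma^*E,\,\phi^*\gamma^*\theta)\,=\,((\gamma\circ\phi)^*E,\,(\gamma\circ\phi)^*\theta)\,=\,((\phi\circ\gamma_X)^*E,\,(\phi\circ\gamma_X)^*\theta),
\]
which equals $\gamma\cdot\Psi(E,\,\theta)$. Thus $\Psi$ is $\Gamma$-equivariant, and in particular $\Psi({\mathcal H}_M(r)^\Gamma)\,\subset\,{\mathcal H}_X(r)^\Gamma$.

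Finally, because $\Psi$ is an embedding by Theorem \ref{thm1} and Corollary \ref{cor1} (or directly by the argument via the Betti side identifying $\Psi$ with the injective map $\Phi$ induced by the surjection $\phi_*\,:\,\pi_1(X,\,x_0)\,\longrightarrow\,\pi_1(M,\,x_0)$), its restriction to the closed subvariety ${\mathcal H}_M(r)^\Gamma$ is also an embedding, with image contained in ${\mathcal H}_X(r)^\Gamma$. This gives the desired embedding ${\mathcal H}_M(r)^\Gamma\,\hookrightarrow\,{\mathcal H}_X(r)^\Gamma$. There is no real obstacle here; the only subtle point to keep in mind is that the $\Gamma$-actions on both moduli spaces are defined via the \emph{same} geometric operation (pullback by the group element), so the identity $\gamma\circ\phi\,=\,\phi\circ\gamma_X$ is enough to match them, and no further compatibility (for instance, with choices of harmonic metrics) needs to be checked.
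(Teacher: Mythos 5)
Your proposal is correct and follows essentially the same route as the paper: define the pullback actions of $\Gamma$ on both moduli spaces, observe that $\Psi$ is $\Gamma$--equivariant, and conclude that its restriction to the fixed loci is an embedding because $\Psi$ itself is one. The only difference is that you spell out the equivariance computation via $\gamma\circ\phi\,=\,\phi\circ\gamma_X$, which the paper simply declares ``evident.''
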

\begin{proof}
The action of $\Gamma$ on $M$ (respectively, $X$) produces an action of $\Gamma$ on the moduli space
${\mathcal H}_M(r)$ (respectively, ${\mathcal H}_X(r)$) of Higgs bundles on $M$ (respectively, $X$)
in \eqref{e7}; the action of any $\gamma\, \in\, \Gamma$ sends any $(E,\, \theta)$ to
$(\gamma^*E,\, \gamma^*\theta)$. The $\Gamma$--fixed locus in ${\mathcal H}_M(r)$ (respectively, ${\mathcal H}_X(r)$)
will be denoted by ${\mathcal H}_M(r)^\Gamma$ (respectively, ${\mathcal H}_X(r)^\Gamma$).
The map $\Psi$ in \eqref{e7} is evidently $\Gamma$--equivariant. Consequently, $\Gamma$ produces
a map
\begin{equation}\label{e7G}
\Psi^\Gamma\, :\, {\mathcal H}_M(r)^\Gamma\, \longrightarrow\, {\mathcal H}_X(r)^\Gamma\, .
\end{equation}
The map $\Psi^\Gamma$ in \eqref{e7G} is an embedding because $\Psi$ is so.
\end{proof}

\subsection{Actions on Riemann surfaces}

We shall now restrict our attention to the case where $M$ is a compact Riemann surface, and thus consider the 
moduli space $\mathcal{M}_{{\rm SL}(r,\mathbb{C})}$ of semistable Higgs bundles as introduced in \cite{Hi1} 
whose structure group is ${\rm SL}(r,\mathbb{C})$: that is, we consider $\mathcal{M}_{{\rm SL}(r,\mathbb{C})}
\,\subset\, {\mathcal H}_M(r)$. In this setting, we can see the following.

\begin{theorem}\label{teo1}
Assume that a finite group $\Gamma \, \subset\, {\rm Aut}(M)$ is acting on $M$ such
that the action is not free (so there are points with nontrivial isotropy).
Then, for $r\,>\,2$ the fixed point locus in $\mathcal{M}_{{\rm SL}(r,\mathbb{C})}$ for the
action of $\Gamma$ will never be a mid-dimensional space, and thus will
never be a Lagrangian subspace of $\mathcal{M}_{{\rm SL}(r,\mathbb{C})}$.
\end{theorem}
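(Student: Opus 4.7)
The plan is to reduce the problem to a dimension count on the quotient $Y\,:=\,M/\Gamma$ via the classical correspondence between $\Gamma$-equivariant Higgs bundles on $M$ and parabolic Higgs bundles on $Y$, and then compare with the mid-dimension $(n^2-1)(g-1)$ of $\mathcal{M}_{{\rm SL}(n,\mathbb{C})}$ using Riemann-Hurwitz.

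First I would set up the quotient $\pi\,:\,M\,\longrightarrow\,Y$, with branch points $p_1,\ldots,p_k\,\in\,Y$ of cyclic stabilizer orders $e_1,\ldots,e_k$. The hypothesis that the action is not free means $k\,\geq\,1$ with $e_i\,>\,1$ for some $i$. Riemann-Hurwitz gives
\[
g-1\,=\,|\Gamma|(g'-1)+\tfrac{|\Gamma|}{2}\sum_{i=1}^{k}(1-\tfrac{1}{e_i})\, ,
\]
where $g'$ is the genus of $Y$. At each $p_i$ the stabilizer $\mathbb{Z}/e_i$ acts on the fiber of any equivariant Higgs bundle via a representation with eigenvalue multiplicity vector $\underline{m}_i\,=\,(m_{i,1},\ldots,m_{i,e_i})$, $\sum_s m_{i,s}\,=\,n$, subject to the ${\rm SL}(n,\mathbb{C})$-determinant constraint.

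Second, by the Nasatyr--Steer / Furuta--Steer / Boden equivalence, each connected component of $\mathcal{M}_{{\rm SL}(n,\mathbb{C})}^{\Gamma}$ labelled by $\{\underline{m}_i\}$ is a moduli space of parabolic ${\rm SL}(n,\mathbb{C})$-Higgs bundles on $Y$ with parabolic type $\underline{m}_i$ at $p_i$; its complex dimension is
\[
2(g'-1)(n^2-1)+\sum_{i=1}^{k}\Big(n^2-\sum_{s}m_{i,s}^{2}\Big)\, .
\]
After substituting Riemann-Hurwitz, the difference between the mid-dimension and the above dimension collapses to
\[
(|\Gamma|-2)(g'-1)(n^2-1)+\sum_{i=1}^{k}\left[\tfrac{(n^2-1)|\Gamma|}{2}(1-\tfrac{1}{e_i})-\Big(n^2-\sum_{s}m_{i,s}^{2}\Big)\right]\, .
\]
An equivalent derivation via deformation theory proceeds by computing $\dim \mathbb{H}^1(\mathcal{C}^\bullet)^\Gamma$ at a stable fixed point using the Atiyah--Bott holomorphic Lefschetz formula applied to the Higgs complex $\mathcal{C}^\bullet\,:\,\mathrm{End}_0(E)\xrightarrow{[\theta,-]}\mathrm{End}_0(E)\otimes K_M$, and yields the same answer via local eigenvalue data.

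The main step, and the principal obstacle, is the arithmetic/combinatorial argument showing that this expression never vanishes when $n\,>\,2$ and the action is not free. The local contribution at $p_i$ equals $\tfrac{(n^2-1)|\Gamma|(e_i-1)}{2e_i}-(n^2-\sum_s m_{i,s}^2)$, and the quantity $n^2-\sum_s m_{i,s}^2$ is maximized when the multiplicities are as balanced as possible, subject to $\sum_s m_{i,s}\,=\,n$ and the ${\rm SL}$-determinant constraint. For $n\,>\,2$ the determinant constraint is loose enough that this maximum falls short of $\tfrac{(n^2-1)|\Gamma|(e_i-1)}{2e_i}$ with a definite sign at every branch point, so the local gaps persist under summation and cannot be cancelled by the global term $(|\Gamma|-2)(g'-1)(n^2-1)$. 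The most delicate subcases ($|\Gamma|\,=\,2$, $g'\,=\,0$, and small values of $e_i$) I would handle by explicit case analysis, invoking parity/topological restrictions on the admissible $\underline{m}_i$ and the integrality of equivariant Euler characteristics to rule out the sporadic configurations. In contrast, for $n\,=\,2$ the determinant constraint forces exact cancellations at each branch point, which is consistent with the mid-dimensional $(B,\,B,\,B)$-branes that appear in the rank-two setting of \cite{finite}.
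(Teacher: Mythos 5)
Your overall strategy is the same as the paper's: both arguments identify the components of $\mathcal{M}_{{\rm SL}(n,\mathbb{C})}^\Gamma$ with moduli spaces of parabolic Higgs bundles on the quotient $Y\,=\,M/\Gamma$ (the paper cites \cite{Bi,BMW} for this; you invoke the Nasatyr--Steer/Boden correspondence) and then compare dimensions against $(n^2-1)(g-1)$ via Riemann--Hurwitz. The difference in execution is that the paper deliberately avoids your component-by-component bookkeeping over multiplicity vectors $\underline{m}_i$: it bounds all components at once by the dimension of the parabolic Higgs moduli with \emph{maximal} quasi-parabolic flags (of dimension $n(n-1)/2$) at every ramification point, and then deduces the required strict inequality from the Hurwitz formula in a single step, with no combinatorial or case-by-case analysis over flag types.

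The genuine gap in your proposal is exactly at what you yourself call the main step. The claim that the local contribution
\[
\frac{(n^2-1)\,|\Gamma|}{2}\Bigl(1-\frac{1}{e_i}\Bigr)-\Bigl(n^2-\sum_{s}m_{i,s}^{2}\Bigr)
\]
has ``a definite sign at every branch point'' for $n\,>\,2$ is not proved, and as stated it is false. Take $|\Gamma|\,=\,2$ and $e_i\,=\,2$, so that your global term $(|\Gamma|-2)(g'-1)(n^2-1)$ vanishes identically: for $n\,=\,3$ and $\underline{m}_i\,=\,(2,1)$ the local term is $8\cdot\frac{1}{2}-(9-5)\,=\,0$, while for $n\,=\,4$ and the balanced type $\underline{m}_i\,=\,(2,2)$ it is $\frac{15}{2}-8\,=\,-\frac{1}{2}\,<\,0$. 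So the local gaps can vanish or reverse sign, and they do not ``persist under summation'': by your own collapsed formula, a hyperelliptic involution with type $(2,1)$ at all $2g+2$ branch points would yield a component of exactly mid dimension, unless such configurations are ruled out by nonemptiness, degree-integrality, or determinant/lift parity constraints --- which is precisely the ``explicit case analysis'' you defer and never carry out. Since $|\Gamma|\,=\,2$, $g'\,=\,0$, $e_i\,=\,2$ is not a sporadic corner but the very first instance of a non-free action, the entire content of the theorem rests on the part of the argument you left open; the proposal therefore does not constitute a proof. (Your consistency remark about $n\,=\,2$ fails for the same reason: with $\underline{m}_i\,=\,(1,1)$ the local term is $\frac{3}{2}-2\,=\,-\frac{1}{2}$, not an exact cancellation.)
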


\begin{proof}
Assume that $r\,>\,2$. Then any connected component of the fixed point locus 
$$\mathcal{M}_{{\rm SL}(r,\mathbb{C})}^\Gamma\, \subset\,\mathcal{M}_{{\rm SL}(r,
\mathbb{C})}$$
for the action of $\Gamma$ is a component of a moduli space of parabolic
Higgs bundles of rank $r$ on the quotient surface $M/\Gamma$ of fixed determinant
for which the trace of the Higgs field is zero \cite{Bi,BMW}.

Consider the quotient map
$$
q\, :\, M\, \longrightarrow\, M/\Gamma\, =:\, Y\, .
$$
The genus of $Y$ will be denoted by $g_Y$. Let $x_1,\, \cdots, \, x_a\, \in\, Y$ be the 
points over which the map $q$ is ramified. For any $1\, \leq\, i\, \leq\, a$, let $r_i$ be the order 
of ramification of $q$ at any point of $q^{-1}(x_i)$ (clearly all points over $x_i$ have same 
order of ramification). By Hurwitz formula,
\begin{equation}\label{e1l}
2(g-1)\,=\, 2N\cdot (g_Y-1)+ \sum_{i=1}^a \frac{Nr_i}{r_i+1}\, ,
\end{equation}
where $N$ is the order of the group $\Gamma$.

We shall compute an upper bound for the dimension of the moduli space of ${\rm SL}(r,\mathbb{C})$ 
parabolic Higgs bundles with parabolic structure at the ramification points for $M$. The 
maximal possible dimension of quasi-parabolic filtrations at a point on a given bundle is 
$r(r-1)/2$, and the number of ramification points is $\sum_{i=1}^a \frac{N}{r_i+1}$. So an 
upper bound for the dimension of the moduli space of parabolic Higgs bundles of rank $r$ is
\begin{equation}\label{e2l}
B\, :=\, 2\left((r^2-1)(g_Y-1) + \frac{r(r-1)}{2}\sum_{i=1}^a \frac{N}{r_i+1}\right)\, ;
\end{equation}
note that the dimension of the moduli space of vector bundles of rank $r$ of fixed determinant
on the curve $Y$ is $(r^2-1)(g_Y-1)$. Now using \eqref{e1l} it follows immediately
that $B$ in \eqref{e2l} satisfies the inequality
$$
2B\, <\, 2(r^2-1)(g-1)\,=\, \dim \mathcal{M}_{{\rm SL}(r,\mathbb{C})}\, .
$$
The theorem follows from this inequality.
\end{proof}

\begin{remark}
It can be shown that the action of the group $\Gamma\,=:\,(\mathbb{Z}/2{\mathbb Z})
\times (\mathbb{Z}/2{\mathbb Z})$ considered in \cite{finite} does not give mid-dimensional
subspaces of the moduli spaces of higher rank Higgs bundles.
\end{remark}

\begin{proposition}
Assume that $M$ admits a holomorphic involution which is fixed point free.
Then the corresponding fixed point locus
$$\mathcal{M}_{{\rm SL}(r,\mathbb{C})}^{\mathbb{Z}/2{\mathbb Z}}
\, \subset\,\mathcal{M}_{{\rm SL}(r,\mathbb{C})}$$
is mid-dimensional for $r\, >\, 1$. \label{teo2}
\end{proposition}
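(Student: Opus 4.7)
My plan is to adapt the dimension bookkeeping from the proof of Theorem~\ref{teo1} to the free-action case, where the strict inequality used there becomes an equality. Let $\gamma\,:\,M\,\longrightarrow\,M$ denote the fixed-point-free holomorphic involution and set $\Gamma\,:=\,\langle\gamma\rangle\,\cong\,\mathbb{Z}/2\mathbb{Z}$. Since $\gamma$ acts freely, the quotient map $q\,:\,M\,\longrightarrow\,Y\,:=\,M/\Gamma$ is an \'etale double cover, so the Hurwitz formula \eqref{e1l} with $a\,=\,0$ and $N\,=\,2$ gives
$$
g-1\,=\,2(g_Y-1)\, .
$$

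For the dimension of the fixed locus, I would compute the tangent space at a generic stable $\Gamma$-fixed Higgs bundle $(E,\,\theta)$ as the $\Gamma$-invariant part of $\mathbb{H}^1$ of the Higgs deformation complex $[\mathrm{End}_0(E)\,\xrightarrow{[\theta,\,\cdot\,]}\,\mathrm{End}_0(E)\otimes K_M]$. Any isomorphism $\phi\,:\,\gamma^*(E,\,\theta)\,\xrightarrow{\sim}\,(E,\,\theta)$ (unique up to scalar, and normalizable so that $\phi\circ\gamma^*\phi\,=\,\pm\mathrm{id}$) induces a $\Gamma$-action on this complex; since $\gamma$ is free, \'etale descent identifies the $\Gamma$-invariant part with $\mathbb{H}^1$ of the analogous complex on $Y$ for the descended Higgs bundle $(F,\,\eta)$, possibly twisted on determinants by the $2$-torsion line bundle $L\in\mathrm{Pic}(Y)$ with $q^*L\,=\,\mathcal{O}_M$. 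This invariant space has dimension $2(n^2-1)(g_Y-1)$, and combining with the Hurwitz relation yields
$$
\dim\,\mathcal{M}_{{\rm SL}(n,\mathbb{C})}^{\mathbb{Z}/2\mathbb{Z}}\,=\,2(n^2-1)(g_Y-1)\,=\,\tfrac{1}{2}\cdot 2(n^2-1)(g-1)\,=\,\tfrac{1}{2}\dim\,\mathcal{M}_{{\rm SL}(n,\mathbb{C})}\, ,
$$
which is precisely mid-dimensional for all $n\,>\,1$.

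The main subtlety lies in the descent step while respecting the ${\rm SL}$-constraint on $M$: one must verify that the two normalizations $\phi\circ\gamma^*\phi\,=\,\pm\mathrm{id}$ produce (at most) two families of components of the fixed locus, corresponding to descended Higgs bundles on $Y$ with $\det\,\in\,\{\mathcal{O}_Y,\,L\}$, and that each such family achieves the expected dimension $2(n^2-1)(g_Y-1)$. This is a finite-sheeted issue that does not affect the dimension count; in fact, it can be bypassed entirely by the infinitesimal tangent-space computation sketched above, which only uses that $q$ is \'etale of degree~$2$.
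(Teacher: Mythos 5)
Your proposal is correct and follows essentially the same route as the paper: both reduce to the \'etale double cover $q\,:\,M\,\longrightarrow\,Y$, use $g-1\,=\,2(g_Y-1)$, and identify the dimension of the fixed locus with $\dim \mathcal{M}^Y_{{\rm SL}(n,\mathbb{C})}\,=\,2(n^2-1)(g_Y-1)$, i.e.\ half of $2(n^2-1)(g-1)$. The only difference is presentational: where the paper quotes the identification of the fixed locus with the (here unramified, hence non-parabolic) moduli space on the quotient from \cite{Bi,BMW}, you verify it directly by an infinitesimal \'etale-descent computation on the Higgs deformation complex and flag the $\det\,\in\,\{\mathcal{O}_Y,\,L\}$ component bookkeeping, which is a sound, self-contained substitute.
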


\begin{proof}
Since $\Gamma\, =\, \mathbb{Z}/2\mathbb Z$ acts freely on $M$, the genus $g_Y$ of the quotient 
Riemann surface $Y\, :=\, M/\Gamma$ is
$$
g_Y\,=\, \frac{g+1}{2}\, .
$$
We note that the given condition that the involution of $M$ is fixed point free implies
that the genus $g$ of $M$ is odd.
Let $\mathcal{M}^Y_{{\rm SL}(r,\mathbb{C})}$ denote the moduli space of
semistable ${\rm SL}(r,\mathbb{C})$--Higgs bundles on $Y$. Then we have
$$\dim \mathcal{M}_{{\rm SL}(r,\mathbb{C})}\,=\, 2(r^2-1)(g-1)\,=\, 4(r^2-1)(g_Y-1)\,=\,
2\dim \mathcal{M}^Y_{SL(r,\mathbb{C})}\, .
$$
The proposition follows from this.
\end{proof}

From the above results, one can further understand the structure of the fixed point
locus as a brane of Higgs bundles. Indeed, in this setting one has the following. 

\begin{proposition} All the fixed point loci
$\mathcal{M}_{{\rm SL}(r,\mathbb{C})}^\Gamma$ constructed in Theorem \ref{teo1} and
Proposition \ref{teo2} are in fact $(B,\,B,\,B)$--branes.\label{teo3}
\end{proposition}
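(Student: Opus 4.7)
The plan is to observe that in both Theorem \ref{teo1} and Proposition \ref{teo2}, the finite group $\Gamma$ acts on $M$ by \emph{holomorphic} automorphisms, so the induced action on $\mathcal{M}_{{\rm SL}(n,\mathbb{C})}$ preserves the entire hyperK\"ahler geometry. Once this is established, the fixed point locus of such a triholomorphic action is automatically a hyperK\"ahler subspace, and therefore a $(B,B,B)$--brane by the same dictionary recalled before Corollary \ref{cor1}.

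First I would verify $I$--holomorphicity. For each $\gamma\in\Gamma$, the assignment $(E,\theta)\,\longmapsto\,(\gamma^*E,\,\gamma^*\theta)$ is an algebraic automorphism of $\mathcal{H}_M(r)$ because $\gamma$ is an algebraic automorphism of $M$; it preserves the condition of fixed trivial determinant and $\mathrm{tr}\,\theta=0$, hence restricts to an $I$--holomorphic automorphism of $\mathcal{M}_{{\rm SL}(n,\mathbb{C})}$. Next, I would transport the action through the nonabelian Hodge correspondence to check $J$--holomorphicity: $\gamma$ induces an outer automorphism $\gamma_*\in\mathrm{Out}(\pi_1(M,x_0))$, which acts algebraically on the Betti moduli space by $[\rho]\,\longmapsto\,[\rho\circ\gamma_*^{-1}]$. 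Since the $C^\infty$ identification of Section \ref{nah} intertwines the two actions, and $J$ is the complex structure of the Betti side, the action is $J$--holomorphic.

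For $K$--holomorphicity I would use that any $C^\infty$ map commuting with both $I$ and $J$ automatically commutes with $K=IJ$. Equivalently, since $\gamma$ is holomorphic, pullback of the harmonic metric in \eqref{fh} remains harmonic, so the $\Gamma$--action is an isometry of the hyperK\"ahler metric; combined with $I$-- and $J$--holomorphicity this forces it to be triholomorphic. The fixed point set of a triholomorphic isometric action on a hyperK\"ahler manifold is a complex submanifold in each of $I,\,J,\,K$, and the restriction of the hyperK\"ahler metric to it is still hyperK\"ahler, giving the claimed $(B,B,B)$--brane.

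The only delicate point I anticipate is that $\mathcal{M}_{{\rm SL}(n,\mathbb{C})}$ is singular in general, so strictly speaking one interprets ``hyperK\"ahler subspace'' on the stable/smooth locus and extends the brane description by closure, exactly as was done for Theorem \ref{thm1} and Corollary \ref{cor1}. Since all three verifications are formal consequences of $\gamma$ being holomorphic and of the functoriality of nonabelian Hodge theory, I expect no substantive obstacle beyond this smoothness caveat.
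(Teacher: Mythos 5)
Your proposal is correct and follows essentially the same route as the paper's proof: $I$--holomorphicity from the algebraicity of the pullback action on the Higgs moduli space, $J$--holomorphicity from the induced action of $\Gamma$ through ${\rm Out}(\pi_1(M))$ on the character variety together with the $\Gamma$--equivariance of the nonabelian Hodge diffeomorphism, and preservation of $K$ (and hence the whole twistor family) following formally. Your additional remarks --- the explicit $K=IJ$ computation, the isometry observation, and the caveat about the singular locus --- are refinements the paper leaves implicit, but they do not change the argument.
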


\begin{proof}
The fixed point locus 
$$
\mathcal{M}_{{\rm SL}(r,\mathbb{C})}^\Gamma\, \subset\,\mathcal{M}_{{\rm SL}(r,
\mathbb{C})}$$ is clearly holomorphic with respect to the natural holomorphic structure
on the moduli space $\mathcal{M}_{{\rm SL}(r,\mathbb{C})}$ of Higgs bundles. On the
other hand, the action of $\Gamma$ on $M$ produces a
homomorphism
$$
f_\Gamma\, :\, \Gamma\, \longrightarrow\, \text{Out}(\pi_1(M))
$$
to the group of outer automorphisms of the fundamental group of $M$. Using this
homomorphism $f_\Gamma$, the group $\Gamma$ acts on the character variety
$$
\text{Hom}(\pi_1(M),\, {\rm SL}(r,\mathbb{C}))/\!\!/ {\rm SL}(r,\mathbb{C})\, .
$$
This action of $\Gamma$ on $\text{Hom}(\pi_1(M),\, {\rm SL}(r,
\mathbb{C}))/\!\!/ {\rm SL}(r,\mathbb{C})$ is evidently given by algebraic automorphisms.

We note that if $\text{Hom}(\pi_1(M),\, {\rm SL}(r,\mathbb{C}))/\!\!/ {\rm SL}(r,\mathbb{C})$ is identified
with the moduli space of flat ${\rm SL}(r,\mathbb{C})$ connections on $M$, then the above action of
$\Gamma$ on $$\text{Hom}(\pi_1(M),\, {\rm SL}(r,\mathbb{C}))/\!\!/ {\rm SL}(r,\mathbb{C})$$ corresponds
to the following action of $\Gamma$ on the moduli space of flat ${\rm SL}(r,\mathbb{C})$ connections:
the action of any $\gamma\, \in\, \Gamma$ sends a flat connection $(F,\, \nabla)$ to the flat connection
$(\gamma^*F,\, \gamma^*\nabla)$.

The $C^\infty$ diffeomorphism
$$
\mathcal{M}_{{\rm SL}(r,\mathbb{C})}\, \stackrel{\sim}{\longrightarrow}\,
\text{Hom}(\pi_1(M),\, {\rm SL}(r,\mathbb{C}))/\!\!/ {\rm SL}(r,\mathbb{C})
$$
given by the nonabelian Hodge theory is $\Gamma$--equivariant.

{}From these we conclude that the action of $\Gamma$ on $\mathcal{M}_{{\rm 
SL}(r,\mathbb{C})}$ preserves all the complex structures in the family of
complex structures on
$$
\mathcal{M}_{{\rm SL}(r,\mathbb{C})}\, =\,
\text{Hom}(\pi_1(M),\, {\rm SL}(r,\mathbb{C}))/\!\!/ {\rm SL}(r,\mathbb{C})
$$
defining the hyperK\"ahler structure. Consequently, the fixed point locus
$\mathcal{M}_{{\rm SL}(r,\mathbb{C})}^\Gamma$ is a $(B,\,B,\,B)$--brane.
\end{proof}

\section*{Acknowledgements}

We thank the two referees for going through the paper very carefully.
IB is supported by a J. C. Bose Fellowship. LPS is partially supported by
NSF CAREER Award DMS-1749013. 

On behalf of all authors, the corresponding author states that there is no conflict of interest. 

%%%%%%%%%%%%%%%%%%%%%%%%%%%%%%%%%%%%%%%%%%%%%%%%%%%%%%%%%%%%%%%%


\begin{thebibliography}{ZZZZ}
\bibitem{aba}
D.~Baraglia and L.~P. Schaposnik, Higgs bundles and {$(A,B,A)$}-branes,
{\it Comm. Math. Phys.} {\bf 331} (2014), 1271--1300.
 
\bibitem{slices}
D.~Baraglia and L.~P. Schaposnik, Real structures on moduli spaces of {H}iggs
bundles, {\it Adv. Theor. Math. Phys.} {\bf 20} (2016), 525--551.
 
\bibitem{Bi} I. Biswas, Parabolic principal Higgs bundles, {\it Jour. Ramanujan Math. Soc.}
{\bf 23} (2008), 311--325.

\bibitem{BMW} I. Biswas, S. Majumder and M. L. Wong, Parabolic Higgs bundles and $\Gamma$-Higgs bundles,
{\it Jour. Aust. Math. Soc.} {\bf 95} (2013), 315--328.

\bibitem{CN} T.-H. Chen and B. C. Ng\^o, On the Hitchin morphism for higher dimensional varieties,
{\it Duke Math. J.} {\bf 169} (2020), 1971--2004.

\bibitem{Co} K. Corlette, Flat G-bundles with canonical metrics,
{\it J. Differential Geom.} {\bf 28} (1988), 361--382.

\bibitem{Do} S. K. Donaldson, Twisted harmonic maps and the self-duality equations,
{\it Proc. London Math. Soc.} {\bf 55} (1987), 127--131.

\bibitem{La} K. Lamotke, The topology of complex projective varieties after S. Lefschetz,
{\it Topology} {\bf 20} (1981), 15--51.

\bibitem{finite}
S. Heller and L. P. Schaposnik, Branes through finite group actions, 
{\it Jour. Geom. Phy.} {\bf 129} (2018), 279--293.

\bibitem{Hi1} N. J. Hitchin, The self-duality equations on a Riemann surface, \emph{Proc.
London Math. Soc.} {\bf 55} (1987), 59--126.

\bibitem{Hi2} N. J. Hitchin, Stable bundles and integrable systems, {\it Duke Math. Jour.}
{\bf 54} (1987), 91--114.

\bibitem{HKLR} N. J. Hitchin, A. Karlhede, U. Lindstr\"om, and M. Rocek, Hyperk\"ahler 
Metrics and Supersymmetry, \textit{Commun. Math. Phys.} {\bf 108} (1987), 535--589.

\bibitem{kap} A. Kapustin and E. Witten, Electric-magnetic duality and the geometric Langlands 
program, {\it Commun. Number Theory Phys.} {\bf 1} (2007), 1--236.

\bibitem{Ko} S. Kobayashi, {\it Differential geometry of complex vector bundles},
Publications of the Mathematical Society of Japan, 15. Kan\^o Memorial Lectures, 5, Princeton
University Press, Princeton, NJ; Princeton University Press, Princeton, NJ, 1987.

\bibitem{Si1} C. T. Simpson, Constructing variations of Hodge structure using Yang-Mills
theory and applications to uniformization, \textit{Jour. Amer. Math. Soc.} \textbf{1} (1988),
867--918.

\bibitem{Si2} C. T. Simpson, Higgs bundles and local systems,
{\it Inst. Hautes \'Etudes Sci. Publ. Math.} \textbf{75} (1992), 5--95.

\bibitem{Si3} C. T. Simpson, Moduli of representations of the fundamental group of a
smooth projective variety. I, {\it Inst. Hautes \'Etudes Sci. Publ. Math.} \textbf{79} (1994), 47--129.

\bibitem{Si4} C. T. Simpson, Moduli of representations of the fundamental group of a
smooth projective variety. II, {\it Inst. Hautes \'Etudes Sci. Publ. Math.} \textbf{80} (1994), 5--79.

\bibitem{Si5} C. T. Simpson, The Hodge filtration on nonabelian cohomology, {\it Algebraic 
geometry}, Santa Cruz 1995, 217--281, {\it Proc. Sympos. Pure Math}. \textbf{62}, Part 2, 
Amer. Math. Soc., Providence, RI, (1997).

\bibitem{We} R. A. Wentworth, Higgs bundles and local systems on Riemann surfaces,
{\it Geometry and quantization of moduli spaces}, 165--219, Adv. Courses Math. CRM Barcelona, 
Birkhäuser/Springer, Cham, 2016.

\end{thebibliography}
\end{document}